\newtheorem{theorem}{Theorem}[section]
\newtheorem{lemma}[theorem]{Lemma}
\newtheorem{prop}[theorem]{Proposition}
\theoremstyle{remark}
\theoremstyle{definition}
\definecolor{lightgray}{RGB}{211,211,211}
\definecolor{lightgreen}{RGB}{144, 238, 144}
\definecolor{llightgray}{RGB}{237,237,237}
\definecolor{lightyellow}{RGB}{205,201,165}
\definecolor{lightskincol}{RGB}{255,229,200}
\definecolor{skincol}{RGB}{255,195,170}
\definecolor{darkskincol}{RGB}{240,184,160}
\definecolor{skinpinkcol}{RGB}{255,204,203}
\definecolor{darkkhaki}{RGB}{204,204,0}
\definecolor{lightorange}{RGB}{255,186,102}
\definecolor{darkorange}{RGB}{255,140,0}
\definecolor{mediumturquoise}{RGB}{72, 209, 204}
\newcommand{\abs}[1]{\left| #1 \right|}
\newcommand{\norm}[1]{\left\lVert #1 \right\rVert}
\newcommand{\scp}[1]{\left\langle #1 \right\rangle}
\newcommand{\set}[1]{\left\lbrace #1\right\rbrace}
\newcommand{\B}{\mathbb{B}}
\newcommand{\C}{C}
\newcommand{\Hilbert}{\mathcal{H}}
\newcommand{\K}{\mathcal{K}}
\newcommand{\M}{\mathcal{M}}
\newcommand{\Radon}{\mathcal{R}}
\newcommand{\restr}[2]{\left. #1 \right|_{#2}}
\newcommand{\N}{\mathbb{N}}
\newcommand{\R}{\mathbb{R}}
\newcommand{\Sp}{\mathbb{S}}
\newcommand{\nd}{\partial_{\nu}}
\renewcommand{\div}{\mathrm{div}}
\colorlet{lred}{red!40}
\colorlet{lgreen}{green!40}
\colorlet{lblue}{blue!40}
\numberwithin{equation}{section}
\numberwithin{theorem}{section}
\begin{document}
	\title{{\fontsize{12}{15}\bfseries\uppercase{Explicit inversion formulas for the two-dimensional wave equation from Neumann traces}}\thanks{\textbf{Funding}: This work has been supported by the Austrian Science Fund (FWF), project P 30747-N32.}}
	
	
\author{Florian Dreier\thanks{Department of Mathematics, University of Innsbruck, Technikerstraße 13, A-6020 Innsbruck, Austria (Florian.Dreier@uibk.ac.at, Markus.Haltmeier@uibk.ac.at).}
\and and \and Markus Haltmeier\footnotemark[2]}

	\date{}
	
	\maketitle

	\begin{abstract}
		In this article we study the problem of recovering the initial data of the two-dimensional wave equation from Neumann measurements on a convex domain $\Omega\subset\R^2$ with smooth boundary. We derive an explicit inversion formula of a so-called back-projection type and deduce exact inversion formulas for circular and elliptical domains. In addition, for circular domains, we show that the initial data can also be recovered from any linear combination of its solution and its normal derivative on the boundary. Numerical results of our implementation of the derived inversion formulas are presented demonstrating their accuracy  and stability.
	\end{abstract}
	\vspace{0.375cm}\par
	\noindent\textbf{Keywords:} wave equation, inverse problems, computed tomography, inversion formula, back-projection, Neumann trace\vspace{0.375cm}
	
	\noindent\textbf{AMS subject classifications:} 35R30, 65R32, 65M32
	
	\section{Introduction}
Inverse source problems for the wave equation form the basis of various practical applications. For example, in photoacoustic tomography (PAT), one is interested to recover the initial pressure distribution from measurements of the induced pressure waves outside of the investigated object. In PAT and other applications, the corresponding initial value problem is given by the wave equation
	\begin{equation}
		\label{eq:waveeq}
		\begin{aligned}
	    	(\partial_t^2-\Delta)u(x,t)&=0 &\quad &\text{for } (x,t) \in \R^n \times (0,\infty),\\
	    	u(x,0)&=f(x)&\quad &\text{for } x \in \R^n,&\\
	    	(\partial_tu)(x,0)&=0&\quad &\text{for } x \in \R^n,
	    \end{aligned}
	\end{equation}
	where $\Delta$ denotes the Laplacian in the spatial component, $f\colon \R^d\to\R$ the initial pressure distribution and $n\geq 2$ the spatial dimension. A typical inverse problem arising in PAT is as follows: Given a bounded domain $\Omega\subset\R^n$ with smooth boundary $\partial\Omega$ and the solution of the wave equation \eqref{eq:waveeq} on $\partial\Omega\times(0,\infty)$, determine the function $f$ in \eqref{eq:waveeq} where $f$ is assumed to be a smooth function vanishing outside of $\Omega$  \cite{LiWa09,WaHu12,RosNtzRaz13}.
	
	The problem of recovering $f$ from Dirichlet data $\restr{u}{\partial\Omega\times(0,\infty)}$ (Dirichlet trace) has been extensively studied in the recent years. Existing techniques for the reconstruction of $f$ are based on Fourier domain algorithms \cite{AK,haltmeier2007thermo,kunyansky2007series,xu2002exact}, iterative algorithms \cite{HalNgu17,belhachmi2016direct,huang2013full,arridge2016adjoint,DeaBueNtzRaz12,SchPerHal18}, time reversal \cite{BurMatHalPal07,HriKucNgu08,SteUhl09,nguyen2016dissipative} and explicit inversion formulas of the back-projection type \cite{FinPatRak04,Hal13,Nat12,Hal14,Kun07,Ngu09,AnsFibMadSey12,Pal14,Sal14}. In this paper, we focus on the latter class, which provides theoretical inside and serves as basis of efficient numerical algorithms. Inversion formulas for Dirichlet measurements are well-known for special domains. In \cite{FinPatRak04,FinHalRak07} several exact formulas for spherical  domains in odd \cite{FinPatRak04} and even \cite{FinHalRak07} dimension have been derived, whereas in \cite{Hal13,Nat12,Hal14}  inversion formulas for elliptical domains in the plane \cite{Hal13}, 3D space \cite{Hal14} and arbitrary spatial dimension \cite{Hal14} have been  derived. In \cite{HalSer15a,HalSer15b} the problem of determining initial data from knowledge of Dirichlet measurements on certain quadratic hypersurfaces, including parabolic surfaces, has also been analyzed. In \cite{Kun11,Kun15}, explicit inversion formulas for the wave equation on the surface of certain polygons, polyhedra and corner-like domains are presented.

Typical ultrasound sensors are often direction dependent and the measured data is a weighted combination of the normal derivative and the pressure \cite{WaPaKuXiStWa03,WaHu12}.
Such measurements can be modeled by
\begin{equation}
		au(x,t) + b\nd u(x,t),\quad (x,t)\in\partial\Omega\times(0,\infty),
	\label{eq:mixedmeas}
\end{equation}
where $\nd$ denotes the normal derivative of $u\colon \R^n\times[0,\infty)\to\R$ along $\partial\Omega$ with respect to the spatial variable and $a,b\neq 0$ are constants. In this paper we study the problem of reconstructing  $f$ in \eqref{eq:waveeq}  from data \eqref{eq:mixedmeas}.
The case $b=0$ corresponds to Dirichlet measurements and is the standard case studied in photoacoustic tomography. The case $a=0$ corresponds to Neumann measurements
\begin{equation}
	b\nd u(x,t),\quad (x,t)\in\partial\Omega\times(0,\infty).
	\label{eq:neumannmeas}
\end{equation}
To this day, explicit formulas for the inversion of the wave equation from Neumann measurements are hardly known. For mixed measurements \eqref{eq:mixedmeas}, in \cite{zangerl2018photoacoustic},  series inversion formulas have been derived. In \cite{FinRak07}, an exact inversion formula in 3D of universal back-projection type for recovering the initial data $g\in C_c^\infty(\B^3_\rho)$ in
\begin{equation*}
		\begin{aligned}
	    	(\partial_t^2-\Delta)v(x,t)&=0 &\quad &\text{for } (x,t) \in \R^3 \times (0,\infty),\\
	    	v(x,0)&=0&\quad &\text{for } x \in \R^3,&\\
	    	(\partial_tv)(x,0)&=g(x)&\quad &\text{for } x \in \R^3
	    \end{aligned}
\end{equation*}
from Neumann measurements on the boundary of the open origin centered ball $\B^3_\rho\subset\R^3$ with radius $\rho>0$ has been derived.

In the present paper, we establish explicit formulas for the inversion of the two-dimensional wave equation \eqref{eq:waveeq} from Neumann measurements $\restr{\nd u}{\partial\Omega\times(0,\infty)}$ (Neumann trace). In Section~\ref{subsec:explformconv}, we derive an explicit formula for the initial data up to a smoothing integral operator for convex domains by the knowledge of \eqref{eq:neumannmeas}. We will show that the smoothing integral operator vanishes for circular and elliptical domains (see \ref{subsec:exactform}). In Section~\ref{subsec:exactformneumann}, we derive an inversion formula for circular domains that exactly recovers $f$ from mixed measurements given in \eqref{eq:mixedmeas}. Numerical results with the derived formulas are presented
in Section~\ref{sec:num}.

\section{Notation and auxiliary results}

\subsection{Notation}

Before we present our results, we start with the notation which we are using throughout this article. By $u\colon \R^2\times[0,\infty)\to\R$ we denote the solution of the two-dimensional wave equation \eqref{eq:waveeq} with initial data $(f,0)$ and by $v\colon \R^2\times[0,\infty)\to\R$ the solution of the wave equation
	\begin{equation}
		\label{eq:waveeq2}
		\begin{aligned}
	    	(\partial_t^2-\Delta)v(x,t)&=0 &\quad &\text{for } (x,t) \in \R^2 \times (0,\infty),\\
	    	v(x,0)&=0&\quad &\text{for } x \in \R^2,&\\
	    	(\partial_tv)(x,0)&=g(x)&\quad &\text{for } x \in \R^2
	    \end{aligned}
	\end{equation}
	with initial data $(0,g)$, where $f,g\in\C_c^\infty(\Omega)$.
	
By $\nu(x)\in\R^2$ we denote the outward unit normal vector of $\partial\Omega$ in $x\in\partial\Omega$.	
	As already mentioned in the introduction, $\nd u(x,t)$ denotes the normal derivative of $u$ at $(x,t)\in \partial\Omega\times(0,\infty)$ with respect to the spatial variable $x$, that is, the derivative of the function
	\begin{equation}	
		\R\to\R\colon h\mapsto u(x+h\nu(x),t)
		\label{eq:normderiv}
	\end{equation}
at zero. Applying the chain rule in \eqref{eq:normderiv} we see that $\nd u(x,t)=\scp{\nabla u(x,t),\nu(x)}$ where $\nabla u(x,t)\in\R^2$ is the gradient of $u$ with respect to $x$.
	
For an integrable function $f\colon \R^2\to\R$, the spherical mean operator of $f$ is defined as \[\M f\colon \R^2\times [0,\infty)\to \R\colon (x,t)\mapsto \frac{1}{2\pi}\int_{\Sp^1} f(x+r\omega) d\sigma(\omega),\] where $\Sp^1\coloneqq\set{x\in\R^2\mid \norm{x}=1}$,  with $\norm{\, \cdot\, }$ being the Euclidian distance is the unit circle, and $\sigma$ the standard surface measure on manifolds. The Radon transform of $f$ is defined in a similar way \[\Radon f\colon \Sp^1\times \R\to \R\colon (\theta,s)\mapsto \int_\R f(s\theta+a\theta^\perp)da,\] where $\theta^\perp$ is a unit vector orthogonal to $\theta$.
By \[\Hilbert_s \varphi\colon \Sp^1\times\R\to\R\colon (\theta,s)\mapsto \frac{1}{\pi}\lim_{\varepsilon\searrow 0}\int_{(s-\varepsilon,s+\varepsilon)}\frac{\varphi(\theta,t)}{s-t}dt\] we denote the Hilbert transform of a function $\varphi\colon \Sp^1\times\R\to\R$ in the second variable. If $\varphi(\theta,\cdot)$ is differentiable for some $\theta\in\Sp^1$, then we also use $\partial_s \varphi(\theta,\cdot)$ to denote the derivative of $\varphi$ with respect to the second variable.

The general inversion formula that we derive for an elliptic domain will
 recover any smooth function $f\in C_c^\infty(\Omega)$
up to smoothing integral operator
	\begin{equation} \label{eq:K}
		\K_\Omega f (x)\coloneqq \frac{1}{8\pi^2}\int_\Omega f(y)\frac{\left(\partial_s^2\Hilbert_s\Radon \chi_\Omega\right)\left(\tilde{n}(x,y),\tilde{s}(x,y)\right)}{\norm{x-y}}dy,
	\end{equation}
which coincides with the error operator appearing in \cite{Hal13}.
Here $\chi_\Omega\colon \R^2\to\R$ is the indicator function of $\Omega$ in $\R^2$, and
for  $x, y\in\R^2$ with $x\neq y$ we set $\tilde{n}(x,y)\coloneqq (y-x)/\norm{x-y}$, $ \tilde{s}(x,y)\coloneqq (\norm{y}^2-\norm{x}^2)/(2\norm{x-y})$.

\subsection{Auxiliary results}
	
First, we recall the well-known explicit solution formulas (see \cite{Jo82})
		\begin{align}
			\label{eq:wavesol1}
			u(x,t)&=\partial_t \int_0^t \frac{r\M f(x,r)}{\sqrt{t^2-r^2}}dr,\quad (x,t)\in\R^2\times[0,\infty),
		\\	\label{eq:wavesol2}
			v(x,t)&=\int_0^t \frac{r\M g(x,r)}{\sqrt{t^2-r^2}}dr,\quad (x,t)\in\R^2\times[0,\infty),
		\end{align}
where  $f,g\in C_c^\infty(\Omega)$, $u$ is the solution of \eqref{eq:waveeq}
with initial data  $(f,0)$ and $v$ the solution of  \eqref{eq:waveeq2} with initial data
$(0,g)$.

The first Lemma which we use for derivation of a explicit inversion formulas is an   integral identity for the spherical means. It  is a corrected  version of \cite[Lemma~{2.1}]{Hal13},
where the term $\int_\Omega \varphi(y)f(y)\log (\norm{x-y}^2 )dy$ is missing.

	\begin{lemma}
		\label{lem:lemidentity1}
		Let $f\in C_c^\infty(\Omega)$ and $\varphi\colon \R^2\to\R$ a continuously differentiable function on $\overline{\Omega}$ vanishing outside of $\overline{\Omega}$. Then, for every $x\in\Omega$ the following identity holds
		\begin{equation}
			\begin{aligned}
			&\int_{\R^2}\varphi(y)\left(\int_0^\infty (\partial_r \M f)(y,r)\log\abs{r^2-\norm{x-y}^2} dr\right)dy\\
			&=\frac{1}{2\pi}\int_{\R^2}\frac{f(y)}{\norm{x-y}}\left(\int_\R\left(\partial_s\Radon\varphi\right)\left(\tilde{n}(x,y),s\right)\log\abs{2\norm{x-y}\left(s-\tilde{s}(x,y)\right)} ds\right)dy\\
			&\quad -\int_\Omega \varphi(y)f(y)\log\left(\norm{x-y}^2\right)dy.
			\end{aligned}
			\label{eq:lemidentity1}
		\end{equation}
	\end{lemma}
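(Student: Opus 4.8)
The plan is to pass from the spherical-mean integral on the left of \eqref{eq:lemidentity1} to the Radon-transform expression on the right in three stages: an integration by parts in the radial variable that isolates the logarithmic term $-\int_\Omega\varphi(y)f(y)\log(\norm{x-y}^2)\,dy$, a polar-to-Cartesian substitution that collapses the spherical mean into a planar integral, and a passage to Radon coordinates in which the resulting Cauchy kernel is recognized as a Hilbert transform.

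First I would fix $x\in\Omega$, write $R=\norm{x-y}$, and integrate the inner integral $\int_0^\infty(\partial_r\M f)(y,r)\log\abs{r^2-R^2}\,dr$ by parts, moving the radial derivative from $\M f$ onto the logarithm. Since $\M f(y,0)=f(y)$ and $\M f(y,\cdot)$ has compact support, the boundary term at $r=0$ contributes $-f(y)\log(R^2)$, while the contributions from the two sides of the interior singularity $r=R$ cancel because $\M f(y,\cdot)$ is continuous and $\log\abs{r^2-R^2}$ has the same leading behaviour from both sides. Integrating this against $\varphi$ produces exactly the term $-\int_\Omega\varphi(y)f(y)\log(\norm{x-y}^2)\,dy$ absent from \cite[Lemma~2.1]{Hal13}, and leaves a principal-value integral $\int_0^\infty\M f(y,r)\,\tfrac{2r}{r^2-R^2}\,dr$, with the sign produced by the integration by parts, to be matched with the first term on the right.

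Next I would insert the definition $\M f(y,r)=\tfrac{1}{2\pi}\int_{\Sp^1}f(y+r\omega)\,d\sigma(\omega)$ and apply the substitution $z=y+r\omega$, under which $r\,dr\,d\sigma(\omega)=dz$ over $z\in\R^2$ and $r=\norm{z-y}$. This turns the remaining contribution into a constant multiple of $\iint\frac{\varphi(y)f(z)}{\norm{z-y}^2-\norm{x-y}^2}\,dz\,dy$. The key algebraic observation is that the denominator is affine in $y$, namely $\norm{z-y}^2-\norm{x-y}^2=\norm{z}^2-\norm{x}^2-2\scp{z-x,y}$, so after applying Fubini and, for fixed $z$, choosing Radon coordinates $y=s\,\tilde{n}(x,z)+a\,\tilde{n}(x,z)^\perp$ aligned with $\tilde{n}(x,z)=(z-x)/\norm{z-x}$, the $a$-integration produces $\Radon\varphi(\tilde{n}(x,z),s)$ and the denominator becomes $-2\norm{z-x}\,(s-\tilde{s}(x,z))$, since the midpoint of $[x,z]$ lies at signed distance $\tilde{s}(x,z)$ on the line with normal $\tilde{n}(x,z)$.

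Finally I would recognize the resulting principal-value integral $\int_\R\frac{\Radon\varphi(\tilde{n},s)}{s-\tilde{s}}\,ds$ as a Hilbert transform $\Hilbert_s\Radon\varphi(\tilde{n},\tilde{s})$, and convert it into the stated logarithmic form via the identity $\int_\R\psi'(s)\log\abs{s-\sigma}\,ds=\pi\,\Hilbert\psi(\sigma)$, valid for compactly supported $\psi$ by one further integration by parts across the Cauchy singularity; applying it with $\psi=\Radon\varphi(\tilde{n},\cdot)$ and $\sigma=\tilde{s}(x,z)$ yields the kernel $\partial_s\Radon\varphi$ against $\log\abs{s-\tilde{s}}$. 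The factor $2\norm{x-y}$ inside the logarithm on the right is harmless: since $\Radon\varphi(\tilde{n},\cdot)$ is compactly supported, $\int_\R(\partial_s\Radon\varphi)(\tilde{n},s)\log(2\norm{x-y})\,ds=0$, so this constant may be inserted freely to obtain the normalization in \eqref{eq:lemidentity1}. The main obstacle is analytic rather than algebraic: one must justify the two nested principal values and the interchange of integrations across the singularities $r=\norm{x-y}$ and $s=\tilde{s}(x,y)$, together with the integrations by parts performed across logarithmic and Cauchy singularities, which is precisely where the delicate sign and convergence bookkeeping must be carried out.
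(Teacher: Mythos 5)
Your plan takes a genuinely different route from the paper, and for most of its length a better one. The paper never touches the logarithmic singularity directly: it first proves the identity with $\log\abs{\,\cdot\,}$ replaced by a differentiable, integrable kernel $\Phi$ (so the integration by parts meets no interior singularity), it \emph{cites} \cite{Hal13} for the conversion of the resulting planar integral into the Radon--transform expression, and it then passes to the limit along a sequence $\Phi_n\to\log\abs{\,\cdot\,}$ by dominated convergence. You instead keep the logarithm throughout, handle the principal values explicitly, and prove the conversion step yourself (Fubini plus Radon coordinates, exploiting that $\norm{z-y}^2-\norm{x-y}^2$ is affine in $y$), which is exactly the step the paper outsources. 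Up to and including that step your argument is correct: the interior boundary terms at $r=\norm{x-y}$ cancel as you claim, the denominator is $-2\norm{z-x}\bigl(s-\tilde{s}(x,z)\bigr)$, and the interchange of the nested principal values that worries you is legitimate but harmless, since both exclusion windows are symmetric about the zero set of the denominator, so the iterated limits coincide.

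The genuine gap is in your final step, and it is a sign, not a technicality. You chain two identifications: that $\mathrm{p.v.}\int_\R\frac{\Radon\varphi(\tilde{n},s)}{s-\tilde{s}}\,ds$ equals $\pi\,(\Hilbert_s\Radon\varphi)(\tilde{n},\tilde{s})$, and that $\int_\R(\partial_s\Radon\varphi)(\tilde{n},s)\log\abs{s-\tilde{s}}\,ds$ equals the same quantity. These cannot both hold under \emph{any} sign convention for $\Hilbert_s$, because the integration by parts you yourself invoke gives
\begin{equation*}
\int_\R(\partial_s\Radon\varphi)(\tilde{n},s)\log\abs{s-\tilde{s}}\,ds
\;=\;-\,\mathrm{p.v.}\int_\R\frac{\Radon\varphi(\tilde{n},s)}{s-\tilde{s}}\,ds ,
\end{equation*}
i.e.\ the two integrals you equate are negatives of one another. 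Carrying your steps through with consistent bookkeeping yields \eqref{eq:lemidentity1} with $-\tfrac{1}{2\pi}$ in front of the first right-hand term, equivalently with $\bigl(\tilde{n}(x,y),\tilde{s}(x,y)\bigr)$ replaced by $\bigl(\tilde{n}(y,x),\tilde{s}(y,x)\bigr)=\bigl(-\tilde{n}(x,y),-\tilde{s}(x,y)\bigr)$. Moreover, this is not repairable by a compensating correction elsewhere: with the paper's stated conventions for $\M$, $\Radon$, $\Hilbert_s$, $\tilde{n}$, $\tilde{s}$, the minus-sign version is the true identity. Test it with $x=0$, $f$ a narrow unit-mass bump at $(2,0)$ and $\varphi$ a narrow unit-mass bump at $(0,1)$ (disjoint supports, so the correction term drops out): the left-hand side tends to $-\frac{1}{\pi}\bigl(\norm{(2,0)-(0,1)}^2-\norm{(0,1)}^2\bigr)^{-1}=-\frac{1}{4\pi}$, while the right-hand side as printed tends to $+\frac{1}{4\pi}$. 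So, as written, your proposal does not prove the statement; corrected, it proves the sign-flipped statement, and in doing so it exposes that the defect sits in the identity the paper's own proof imports from \cite{Hal13} (that identity is valid for odd kernels $\Phi$, whereas $\log\abs{\,\cdot\,}$ is even). The discrepancy is invisible in the paper's exact formulas for circular and elliptical domains only because it propagates into the operator $\K_\Omega$, which vanishes there; you should flag it rather than absorb it.
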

	\begin{proof}
		\begin{enumerate}[wide=\parindent,label=(\roman*)]			
			\item \label{item:lemidentityphi} First we show identity \eqref{eq:lemidentity1} when $\log\abs{\,\cdot\,}$ is replaced by a differentiable and integrable function $\Phi\colon\R\to\R$.
			
			Applying integration by parts on the left integral in \eqref{eq:lemidentity1} yields
			\begin{align*}
				\int_0^\infty &(\partial_r \M f)(y,r)\Phi\left(\abs{r^2-\norm{x-y}^2}\right) dr\\
				&=-f(y)\Phi\left(-\norm{x-y}^2\right)-2\int_0^\infty \M f (y,r)\Phi'\left(r^2-\norm{x-y}^2\right)rdr.
			\end{align*}
			Using the definition of the spherical mean operator and introducing polar coordinates, we further obtain
			\begin{align*}
				&\int_0^\infty (\partial_r \M f)(y,r)\Phi\left(\abs{r^2-\norm{x-y}^2}\right) dr\\
				&=-f(y)\Phi\left(-\norm{x-y}^2\right)-\frac{1}{\pi}\int_0^\infty \int_{\Sp^1} f(y+r\omega,r)\Phi'\left(r^2-\norm{x-y}^2\right)rd\sigma(\omega)dr\\
				&=-f(y)\Phi\left(-\norm{x-y}^2\right)-\frac{1}{\pi}\int_{\R^2} f(y+z)\Phi'\left(\norm{z}^2-\norm{x-y}^2\right)dz.
			\end{align*}
			After substituting $z$ with $z-y$ in the last integral,
			multiplying with $\varphi(y)$ and integrating both sides with respect to $y$, we obtain
			\begin{align*}
				\int_{\R^2} \varphi(y)&\int_0^\infty(\partial_r \M f)(y,r)\Phi\left(\abs{r^2-\norm{x-y}^2}\right) dr dy\\
				&=-\int_{\R^2}\varphi(y)f(y)\Phi\left(-\norm{x-y}^2\right)dy-\frac{1}{\pi}\int_\Omega \varphi(y) f(y) \int_{\R^2} \Phi'\left(\norm{z}^2-\norm{x-y}^2\right)dz dy,
			\end{align*}
			where we used Fubini's theorem to change the order of the integrals. In \cite{Hal13} it has been shown that second integral on the hand right side equals to
			\begin{equation*}
				\frac{1}{2\pi}\int_{\R^2}\frac{f(y)}{\norm{x-y}}\left(\int_\R\left(\partial_s\Radon\varphi\right)\left(\tilde{n}(x,y),s\right)\Phi\left(2\norm{x-y}\left(s-\tilde{s}(x,y)\right)\right) ds\right)dy,
			\end{equation*}
			which shows the desired identity for $\Phi$ in place of $\log\abs{\,\cdot\,}$.
			\item Finally, we choose a sequence $(\Phi_n)_{n\in\N}$ of differentiable and integrable functions converging pointwise to $\log\abs{\,\cdot\,}$. Thus, applying \ref{item:lemidentityphi} and Lebesgue's dominated convergence theorem yield the claimed identity.\qedhere
		\end{enumerate}
	\end{proof}
The next Lemma can  be found in \cite{Hal13}, where the proof however
contains a small error. Below  we give a  corrected proof based on the corrected Lemma~\ref{lem:lemidentity1}, which for the convenience of the reader is given in full detail.

	\begin{lemma}
		\label{lem:lemidentity2}
		Suppose that $f,g\in C_c^\infty(\Omega)$. Then, we have
		\begin{equation*}
			\int_\Omega\int_0^\infty u(x,t)v(x,t) dt dx=-\frac{1}{8\pi^2}\int_\Omega\int_\Omega f(x)g(y)\frac{\left(\Hilbert_s\Radon \chi_\Omega\right)\left(\tilde{n}(x,y),\tilde{s}(x,y)\right)}{\norm{x-y}}dxdy,
		\end{equation*}
		where $u$ is the solution of \eqref{eq:waveeq} with initial data $(f,0)$ and $v$ the solution of \eqref{eq:waveeq2} with initial data $(0,g)$. Moreover, $\tilde{n}(x,y)\coloneqq (y-x)/\norm{x-y}$, $ \tilde{s}(x,y)\coloneqq (\norm{y}^2-\norm{x}^2)/(2\norm{x-y})$ for $x \neq y \in \Omega$.
	\end{lemma}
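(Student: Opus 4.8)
The plan is to reduce the statement to the corrected Lemma~\ref{lem:lemidentity1} by first evaluating the inner time integral $I(x)\coloneqq\int_0^\infty u(x,t)v(x,t)\,dt$ pointwise in $x$, and only afterwards integrating over $\Omega$. Substituting the explicit solution formulas \eqref{eq:wavesol1} and \eqref{eq:wavesol2}, one sees that in the squared variables $\tau=t^2$, $a=r^2$, $b=\rho^2$ both $u$ and $v$ are (derivatives of) half-order Abel transforms of the spherical means: writing $\hat f(a)=\M f(x,\sqrt a)$ and $A h(\tau)=\int_0^\tau h(a)/\sqrt{\tau-a}\,da$, one has $\int_0^t r\M f(x,r)/\sqrt{t^2-r^2}\,dr=\tfrac12(A\hat f)(\tau)$ and likewise for $g$. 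Since $u$ is the $t$-derivative of the first expression and $dt=d\tau/(2\sqrt\tau)$, the change of variables turns $I(x)$ into $\tfrac14\int_0^\infty (A\hat f)'(\tau)\,(A\hat g)(\tau)\,d\tau$.

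First I would expand the Abel derivative as $(A\hat f)'(\tau)=\hat f(0)/\sqrt\tau+(A\hat f')(\tau)$, noting that the boundary term carries the pointwise value $\hat f(0)=\M f(x,0)=f(x)$. Applying Fubini reduces everything to the one-dimensional time kernel $\int_{\max(a,b)}^\infty d\tau/\sqrt{(\tau-a)(\tau-b)}$. This integral diverges logarithmically at infinity, but the divergent parts coming from the $(A\hat f')$ term and from the boundary term cancel, because $\int_0^\infty\hat f'(a)\,da=-f(x)$. What survives is a finite logarithmic kernel, and back in the original variables one obtains a decomposition of $I(x)$ into a main term proportional to $\int_0^\infty\int_0^\infty(\partial_r\M f)(x,r)\,\M g(x,\rho)\,\rho\,\log\abs{r^2-\rho^2}\,dr\,d\rho$ and a local term proportional to $f(x)\int_0^\infty\M g(x,\rho)\rho\log\rho\,d\rho$.

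Next I would integrate over $x\in\Omega$. Rewriting the spherical mean $\M g(x,\rho)$ as a circular integral collapses the $\rho$-integration, turning $\int_0^\infty\M g(x,\rho)\rho\log\abs{r^2-\rho^2}\,d\rho$ into $\tfrac{1}{2\pi}\int_{\R^2}g(y)\log\abs{r^2-\norm{x-y}^2}\,dy$. The main term then contains exactly $\int_\Omega\int_0^\infty(\partial_r\M f)(x,r)\log\abs{r^2-\norm{x-y}^2}\,dr\,dx$, which is the left-hand side of Lemma~\ref{lem:lemidentity1} with $\varphi=\chi_\Omega$ and the roles of $x$ and $y$ interchanged. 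Applying that lemma, splitting off $\log(2\norm{x-y})$ and using that $\int_\R\partial_s\Radon\chi_\Omega\,ds=0$, the constant factor drops out; a single integration by parts in $s$ converts $\int_\R(\partial_s\Radon\chi_\Omega)(\theta,s)\log\abs{s-\tilde s}\,ds$ into $\pi(\Hilbert_s\Radon\chi_\Omega)(\theta,\tilde s)$, producing the kernel $(\Hilbert_s\Radon\chi_\Omega)(\tilde n(x,y),\tilde s(x,y))/\norm{x-y}$ (the evenness of $\Radon\chi_\Omega$ and the resulting oddness of $\Hilbert_s\Radon\chi_\Omega$ under $(\theta,s)\mapsto(-\theta,-s)$ account for the sign once the swapped arguments $\tilde n(y,x),\tilde s(y,x)$ are restored). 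Crucially, the extra term $-\int_\Omega\chi_\Omega(y)f(y)\log(\norm{x-y}^2)\,dy$ supplied by the corrected Lemma~\ref{lem:lemidentity1} cancels exactly the local term left over from the previous step; this cancellation is the whole point of the correction and fails with the uncorrected identity.

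The main obstacle is the rigorous treatment of the divergent time kernel: one must justify the interchange of integrals and extract the finite logarithmic part as a genuine limit—for instance by truncating the $\tau$-integral at $T$ and passing to the limit, using $\int_0^\infty\hat f'(a)\,da=-f(x)$ to kill the $\log(4T)$ divergence—after first checking that the boundary contributions at $t=\infty$ vanish thanks to the compact support of $f$ and $g$. The remaining work, namely tracking the constants through the two changes of variables and the $\tfrac{1}{2\pi}$ from the spherical mean and confirming the overall sign, is routine but must be done carefully to land on the stated factor $-\tfrac{1}{8\pi^2}$.
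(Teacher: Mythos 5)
Your proposal is correct and follows essentially the same route as the paper: its step (ii) establishes exactly your decomposition of $\int_0^\infty u(x,t)v(x,t)\,dt$ into the main term with kernel $\log\abs{r_1^2-r_0^2}$ plus the local term $f(x)\log(r_1)r_1\M g(x,r_1)$ (obtained by truncating at $T$ and cancelling the divergence against $\int_0^T f v\,dt$, which is your Abel-transform cancellation in different notation), and then applies the corrected Lemma~\ref{lem:lemidentity1} with $\varphi=\chi_\Omega$ so that the lemma's extra logarithmic term cancels the local term --- precisely the cancellation you identify as the point of the correction. The details you supply for converting the logarithmic kernel into the Hilbert transform (integration by parts in $s$, the vanishing of $\int_\R \partial_s\Radon\chi_\Omega\,ds$, and the parity bookkeeping under $(\theta,s)\mapsto(-\theta,-s)$) are left implicit in the paper's step (iii) but are exactly what it uses.
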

	\begin{proof}
		\begin{enumerate}[wide=\parindent,label=(\roman*)]
			\item First, we show that $u(x,\cdot)v(x,\cdot)$ is integrable on $(0,\infty)$ for any fixed $x\in\Omega$. By applying the Leibnitz-rule and  \eqref{eq:wavesol1} we obtain
			\begin{equation}
				\label{eq:wavesol3}
				u(x,t)=f(x)+\int_0^t \frac{t}{\sqrt{t^2-r^2}}\partial_r\M f(x,r) dr,\quad t\in[0,\infty).
			\end{equation}
			Since $\Omega$ is bounded, we can find $R>0$ such that $\partial_r\M f(x,t)=\M f(x,t)=0$ for $r\geq R$. Now, let $T>R$ be a fixed positive number. From the above equality we deduce
			\begin{align*}
				\abs{u(x,t)}&\leq \abs{f(x)}+t\norm{\partial_r \M f}_\infty \int_0^t \frac{1}{\sqrt{t^2-r^2}}dr=\abs{f(x)}+t\norm{\partial_r\M f}_\infty \frac{\pi}{2}
			\end{align*}
			for $0<t\leq T$. For $t>T$, we obtain the inequality
			\begin{align*}
				\abs{u(x,t)}&=\abs{\partial_t \int_0^R \frac{r\M f(x,r)}{\sqrt{t^2-r^2}}dr}\leq t\norm{\partial_r\M f}_\infty\frac{R}{\left(t^2-T^2\right)^{3/2}}.
			\end{align*}
			by differentiating under the integral sign in \eqref{eq:wavesol1}. Analogously, we deduce from \eqref{eq:wavesol2} the estimates
			\begin{equation*}
			\abs{v(x,t)}\leq
			\begin{cases}
				 t\norm{\partial_r\M g}_\infty
				 & \text{ if }  0<t\leq T
				 \\
				\norm{\partial_r\M g}_\infty\frac{R^2}{2\sqrt{t^2-R^2}}
				 & \text{ if }  t>T \,.			
				 \end{cases}
\end{equation*}
			Thus, we obtain
			\begin{equation*}
				\int_0^\infty \abs{u(x,t)v(x,t)}dt=\int_0^T \abs{u(x,t)v(x,t)}+\int_T^\infty \abs{u(x,t)v(x,t)}\leq c_1+c_2\int_{T^2-R^2}^\infty \frac{1}{t^2} dt < \infty \,,
			\end{equation*}
			where $c_1,c_2$ are positive constants. This shows the integrability of $u(x,\cdot)v(x,\cdot)$ and in particular the integrability of $uv$ on $\Omega\times (0,\infty)$.
			\item \label{item:step1}In the next step we show that
			\begin{equation}
			\label{eq:identity1}
			\begin{aligned}
				&\int_\Omega \int_0^\infty u(x,t)v(x,t)dt dx\\
				&=-\int_\Omega\int_0^\infty f(x)\log(r_1)r_1\M g(x,r_1)dr_1 dx\\
				&\quad -\frac{1}{2}\int_\Omega\int_0^\infty\int_0^\infty\partial_r\M f(x,r_0)r_1\M g(x,r_1)\log(\abs{r_1^2-r_0^2})dr_0 dr_1 dx \,.
			\end{aligned}
			\end{equation}
			For that purpose, let $T>0$ be fixed again. From \eqref{eq:wavesol3} we see that
			\begin{multline*}
			\int_0^T u(x,t)v(x,t)dt dx
			=
				\int_0^Tf(x)v(x,t)dt\\
				+\int_0^T\int_0^T\int_0^T\chi_{(0,t)^2}(r_0,r_1)\frac{t\partial_r\M f(x,r_0)r_1\M f(x,r_1)}{\sqrt{t^2-r_0^2}\sqrt{t^2-r_1^2}}dr_1dr_0 dt.
			\end{multline*}
			The right triple-integral can be evaluated to
			\begin{multline} \label{eq:integral1}
				\int_0^T\int_0^T \partial_r\M f(x,r_0)r_1\M g(x,r_1)\log\left(\sqrt{T^2-r_0^2}+\sqrt{T^2-r_1^2}\right)dr_1dr_0\\
				-\frac{1}{2}\int_0^T\int_0^T \partial_r\M f(x,r_0)r_1\M g(x,r_1)\log{\abs{r_1^2-r_0^2}}dr_1dr_0\,,
			\end{multline}
			where we  applied Fubini's theorem and used the identity
			\begin{equation*}
				\int_{\max\set{r_0,r_1}}^T \frac{t}{\sqrt{t^2-r_0^2}\sqrt{t^2-r_1^2}}dt=\log\left(\sqrt{T^2-r_0^2}+\sqrt{T^2-r_1^2}\right)-\log{\abs{r_1^2-r_0^2}} \,.
			\end{equation*}
			Next, we use integration by parts and Fubini's theorem again on \eqref{eq:integral1} to obtain
			\begin{align*}
				-\int_0^Tf(x)r_1&\M g(x,r_1)\log\left(T+\sqrt{T^2-r_1^2}\right)dr_1\\
				&+\int_0^T\int_0^T\frac{r_0r_1\M f(x,r_0)\M g(x,r_1)}{\left(\sqrt{T^2-r_0^2}+\sqrt{T^2-r_1^2}\right)\sqrt{T^2-r_0^2}}dr_0 dr_1
			\end{align*}
			where the first integral can be extended to
			\begin{align*}
				-\int_0^Tf(x)&\left(\int_{r_1}^T\frac{r_1\M g(x,r_1)}{\sqrt{t^2-r_1^2}}dt+\log(r_1)r_1\M g(x,r_1)\right)dr_1\\
				&=-\int_0^Tf(x)v(x,t)dt-\int_0^T f(x)\log(r_1)r_1\M g(x,r_1) dr_1.
			\end{align*}
			by a further application of Fubini's theorem. Thus, we finally have
			\begin{align*}
				\int_0^T u(x,t)v(x,t)dt&=\int_0^T f(x)\log(r_1)r_1\M g(x,r_1) dr_1\\
				&+\int_0^T\int_0^T \frac{r_0r_1\M f(x,r_0)\M g(x,r_1)}{\left(\sqrt{T^2-r_0^2}+\sqrt{T^2-r_1^2}\right)\sqrt{T^2-r_0^2}}dr_0 dr_1\\
				&-\frac{1}{2}\int_0^T\int_0^T \partial_r\M f(x,r_0)r_1\M g(x,r_1)\log{\abs{r_1^2-r_0^2}}dr_1dr_0.
			\end{align*}
			Letting $T\to \infty$ and integrating both sides afterwards we see that \eqref{eq:identity1} holds.
			
			In the last two steps we reshape both integrals in \eqref{eq:identity1} on the right side to prove the final statement.
			\item \label{item:step2}Using the definition of spherical mean operator and polar coordinates we observe that second integral on in the right hand side of \eqref{eq:identity1} can be evaluated to
			\begin{multline*}
				-\frac{1}{2}\int_\Omega\int_0^\infty\int_0^\infty\partial_r\M f(x,r_0)r_1\M g(x,r_1)\log(\abs{r_1^2-r_0^2})dr_0 dr_1 dx
				\\ = -\frac{1}{4\pi}\int_\Omega\int_0^\infty\int_{\R^2}\partial_r\M f(x,r_0)g(x+y)\log\left(\abs{r_0^2-\norm{y}^2}\right)dy dr_0 dx.
			\end{multline*}
			One further application of Fubini's theorem and substitution of $y$ with $y-x$ lead then to
			\begin{equation*}
				-\frac{1}{4\pi}\int_{\R^2}g(x)\int_{\R^2}\chi_\Omega(y)\int_0^\infty \partial_r\M f(x,r_0)\log\left(\abs{r_0^2-\norm{x-y}^2}\right)dr_0 dy dx
			\end{equation*}
			and hence, applying Lemma \ref{lem:lemidentity1}, we see that this integral coincides with
			\begin{equation}
			\label{eq:identity2}
			\begin{aligned}
				-\frac{1}{8\pi^2}\int_\Omega\int_\Omega f(x)g(y)&\frac{\left(\Hilbert_s\Radon \chi_\Omega\right)\left(\tilde{n}(x,y),\tilde{s}(x,y)\right)}{\norm{x-y}}dxdy\\
				&+\frac{1}{4\pi}\int_\Omega g(x)\int_\Omega f(y)\log\left(\norm{x-y}^2\right)dy dx.
			\end{aligned}
			\end{equation}
			\item Finally, using polar coordinates and applying the substitution rule for the first integral in \eqref{eq:identity1} yields
			\begin{equation*}
				-\int_\Omega\int_0^\infty f(x)\log(r_1)r_1\M g(x,r_1)dr_1 dx=\frac{1}{4\pi}\int_\Omega f(x)\int_\Omega g(y)\log\left(\norm{x-y}^2\right)dy dx \,.
			\end{equation*}
		Changing the order of integration in the last displayed equation and  combining this with Items \ref{item:step1} and \ref{item:step2} show the claimed statement.\qedhere
		\end{enumerate}
	\end{proof}

\section{Main results}
		
In this section  we present and prove our inversion formulas for the inversion of the two-dimensional wave equation from Neumann measurements.

\subsection{Formula for Neumann traces on convex domains}\label{subsec:explformconv}

The first main result is an explicit reconstruction integral  that applies  to
arbitrary convex domains and yields an  exact inversion formula
from the Neumann trace  up to an explicitly given smoothing integral operator. 	
	
	\begin{theorem}
		\label{thm:explicitform}
		Let $\Omega\subset \R^2$ be a bounded convex domain with smooth boundary, $f\in C_c^\infty(\Omega)$ and  $u\colon \R^2\times[0,\infty)\to\R$ be the solution of the wave equation \eqref{eq:waveeq} with initial data $(f,0)$
		Then, for every $x\in\Omega$, we have
		\begin{equation}
			\label{eq:explictform}
			f(x)=\frac{1}{\pi}\int_{\partial\Omega}\int_{\norm{x-y}}^\infty \frac{\nd u(y,t)}{\sqrt{t^2-\norm{x-y}^2}}dtd\sigma(y)+\K_\Omega f(x)\,,
		\end{equation}
		where the integral operator $\K_\Omega$ is defined by
		\eqref{eq:K}.
\end{theorem}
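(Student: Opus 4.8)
The plan is to prove \eqref{eq:explictform} in weak form. Since both sides are continuous in $x$, it suffices to show that $\int_\Omega (\mathcal{B}f)(x)\,g(x)\,dx=\int_\Omega\big(f(x)+\K_\Omega f(x)\big)g(x)\,dx$ for every $g\in C_c^\infty(\Omega)$, where $\mathcal{B}f$ denotes the back-projection integral (the first term on the right of \eqref{eq:explictform}) and $v$ is the solution of \eqref{eq:waveeq2} with data $(0,g)$. First I would insert the definition of $\mathcal{B}f$, apply Fubini, and pass to polar coordinates around $y$ in the inner $x$-integral. Because $g$ vanishes outside $\Omega$, the inner integral $\int_{\norm{x-y}<t}\tfrac{g(x)}{\sqrt{t^2-\norm{x-y}^2}}\,dx$ equals $2\pi v(y,t)$ by the solution formula \eqref{eq:wavesol2}, so the paired back-projection collapses to a boundary integral,
\begin{equation*}
\int_\Omega (\mathcal{B}f)(x)\,g(x)\,dx = 2\int_0^\infty\int_{\partial\Omega} v(y,t)\,\nd u(y,t)\,d\sigma(y)\,dt =: 2I.
\end{equation*}

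Next I would symmetrize. Writing $I':=\int_0^\infty\int_{\partial\Omega} u\,\nd v\,d\sigma\,dt$, Green's second identity and the wave equation give $\int_{\partial\Omega}(v\nd u-u\nd v)\,d\sigma=\int_\Omega(v\Delta u-u\Delta v)\,dx=\int_\Omega\partial_t(v\partial_t u-u\partial_t v)\,dx$; integrating in $t$ and using the initial conditions and the decay of $u,v$ as $t\to\infty$ (the estimates from the proof of Lemma~\ref{lem:lemidentity2}) yields $I-I'=\int_\Omega fg\,dx$. On the other hand $\nd(uv)=v\nd u+u\nd v$ and the divergence theorem give $I+I'=\int_0^\infty\int_\Omega\Delta(uv)\,dx\,dt=\int_\Omega\Delta_x\Psi(x)\,dx$, where $\Psi(x):=\int_0^\infty u(x,t)v(x,t)\,dt$. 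Adding,
\begin{equation*}
\int_\Omega (\mathcal{B}f)(x)\,g(x)\,dx = 2I = \int_\Omega fg\,dx + \int_\Omega\Delta_x\Psi(x)\,dx,
\end{equation*}
so the problem reduces to identifying $\int_\Omega\Delta_x\Psi\,dx$ with the $\K_\Omega$-term.

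To compute it I would expand $\Delta(uv)=v\,\partial_t^2u+2\nabla u\cdot\nabla v+u\,\partial_t^2v$ (using $\Delta=\partial_t^2$ on solutions) and apply Lemma~\ref{lem:lemidentity2} to each summand: since the wave operator has constant coefficients, spatial and temporal derivatives of a solution are again solutions, so $\partial_t^2u$ solves \eqref{eq:waveeq} with data $(\Delta f,0)$, $\partial_{x_i}u$ with data $(\partial_{x_i}f,0)$, and correspondingly $\partial_t^2v,\partial_{x_i}v$ solve \eqref{eq:waveeq2} with data $(0,\Delta g),(0,\partial_{x_i}g)$. This expresses $\int_\Omega\Delta_x\Psi\,dx$ as
\begin{equation*}
-\frac{1}{8\pi^2}\int_\Omega\int_\Omega\big[\Delta f(x)\,g(y)+2\nabla f(x)\cdot\nabla g(y)+f(x)\,\Delta g(y)\big]\frac{\left(\Hilbert_s\Radon\chi_\Omega\right)\!\left(\tilde{n}(x,y),\tilde{s}(x,y)\right)}{\norm{x-y}}\,dx\,dy,
\end{equation*}
and integrating the derivatives of $f,g$ by parts onto the kernel $K(x,y):=\tfrac{(\Hilbert_s\Radon\chi_\Omega)(\tilde{n}(x,y),\tilde{s}(x,y))}{\norm{x-y}}$ turns the bracket into the operator $\sum_i(\partial_{x_i}+\partial_{y_i})^2=\Delta_x+2\sum_i\partial_{x_i}\partial_{y_i}+\Delta_y$ acting on $K$.

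The crux — and the step I expect to be the main obstacle — is the kernel identity
\begin{equation*}
\Big(\Delta_x+2\sum_i\partial_{x_i}\partial_{y_i}+\Delta_y\Big)K(x,y)=\frac{\left(\partial_s^2\Hilbert_s\Radon\chi_\Omega\right)\!\left(\tilde{n}(x,y),\tilde{s}(x,y)\right)}{\norm{x-y}}.
\end{equation*}
The point is that $\sum_i(\partial_{x_i}+\partial_{y_i})^2$ is the Laplacian for the simultaneous translation $(x,y)\mapsto(x+a,y+a)$, under which $\tilde{n}(x,y)$ and $\norm{x-y}$ are invariant while $\tilde{s}(x,y)\mapsto\tilde{s}(x,y)+\scp{a,\tilde{n}(x,y)}$; since $\norm{\tilde{n}}=1$, two translation-derivatives act on $K$ exactly as $\partial_s^2$ in the second argument of $\Hilbert_s\Radon\chi_\Omega$, which is the claimed identity. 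Combined with the parity $\big(\partial_s^2\Hilbert_s\Radon\chi_\Omega\big)(-\theta,-s)=-\big(\partial_s^2\Hilbert_s\Radon\chi_\Omega\big)(\theta,s)$, which lets one exchange the roles of $x$ and $y$ so as to match the definition \eqref{eq:K}, this rewrites $\int_\Omega\Delta_x\Psi\,dx$ as $\int_\Omega\K_\Omega f\,g\,dx$ and gives \eqref{eq:explictform}. The technical care needed is twofold: justifying the term-by-term use of Lemma~\ref{lem:lemidentity2} and the interchange of $\Delta_x$ with the $t$-integral (which rests on the decay estimates applied to the solutions with data $\Delta f,\partial_{x_i}f,\dots$, together with the resulting normalization and sign bookkeeping that fixes the coefficient in \eqref{eq:K}), and controlling the integration by parts near the diagonal $x=y$, where $K$ is singular and $\Hilbert_s$ is nonlocal, so that no spurious boundary or distributional contribution is produced.
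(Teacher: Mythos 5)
Your overall route is the same as the paper's. You test \eqref{eq:explictform} against $g\in C_c^\infty(\Omega)$, collapse the paired back-projection to $2\int_0^\infty\int_{\partial\Omega} v\,\nd u\,d\sigma\,dt$ via \eqref{eq:wavesol2} and Fubini (the paper performs the identical computation, in the reverse direction, when deducing Theorem~\ref{thm:explicitform} from Proposition~\ref{prop:thmidentity3}); your $I\pm I'$ symmetrization is steps (i)--(ii) of the paper's proof of Proposition~\ref{prop:thmidentity3} in different clothing; and the identification of $\int_0^\infty\int_\Omega \Delta(uv)\,dx\,dt$ via Lemma~\ref{lem:lemidentity2} applied to the differentiated solutions, integration by parts, and the kernel identity is exactly the paper's step (iii). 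Your two technical flags are genuine and are both passed over silently in the paper: the antipodal parity of $\partial_s^2\Hilbert_s\Radon\chi_\Omega$ really is needed to match the definition \eqref{eq:K}, and your translation picture in the variables $u=y-x$, $w=(x+y)/2$ is exactly the right way to see the kernel identity and simultaneously shows the diagonal is harmless (only $w$-derivatives get moved; the $1/\norm{u}$ singularity is never differentiated).

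There is, however, a genuine unresolved problem: your signs do not produce \eqref{eq:explictform}, and your last sentence asserts it anyway. Carried through as you state them, your steps give: the double integration by parts preserves the sign (as you say, and as your $(u,w)$ argument confirms), so
\begin{equation*}
\int_0^\infty\!\!\int_\Omega \Delta(uv)\,dx\,dt
=-\frac{1}{8\pi^2}\int_\Omega\int_\Omega f(x)g(y)\,
\frac{\left(\partial_s^2\Hilbert_s\Radon \chi_\Omega\right)\left(\tilde{n}(x,y),\tilde{s}(x,y)\right)}{\norm{x-y}}\,dx\,dy,
\end{equation*}
and the parity/exchange step turns the right-hand side into $+\int_\Omega (\K_\Omega f)\,g\,dx$ (one factor $-1$ from oddness under $(\theta,s)\mapsto(-\theta,-s)$, one from swapping which variable is paired with $f$: note that in \eqref{eq:K} the vector $\tilde n$ points from the output variable toward the $f$-variable, while in Lemma~\ref{lem:lemidentity2} it points from the $f$-variable toward the $g$-variable). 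Hence your chain yields $\int(\mathcal{B}f)g=\int fg+\int(\K_\Omega f)g$, i.e.\ $f=\mathcal{B}f-\K_\Omega f$ --- the opposite sign to \eqref{eq:explictform}; the two statements are compatible only if $\K_\Omega\equiv 0$. The point where you diverge from the paper is precise: in step (iii) of Proposition~\ref{prop:thmidentity3} the paper transfers $(\nabla_x+\nabla_y)^2$ onto the kernel \emph{with} a change of sign (passing from $-\tfrac{1}{8\pi^2}\iint(\nabla_x+\nabla_y)^2(fg)\,K$ in \eqref{eq:identity4} to $+\tfrac{1}{8\pi^2}\iint fg\,(\nabla_x+\nabla_y)^2K$), which a double integration by parts of compactly supported $f,g$ does not produce, and it then matches the result to $-\int(\K_\Omega f)g$ using, implicitly, the same parity argument you state. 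So you must either locate a compensating sign in your argument (your own analysis rules out the integration by parts and the diagonal), or accept that your proof establishes the formula with $-\K_\Omega$ --- which agrees with Theorem~\ref{thm:exactform}, since $\K_\Omega$ vanishes for disks and ellipses, but is not the statement you set out to prove. Until this sign is reconciled, the proposal does not prove \eqref{eq:explictform} as stated.
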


We will derive Theorem \ref{thm:explicitform} from the following Proposition which is the key ingredient for the derivation of all main results in this paper.
	\begin{prop}
		\label{prop:thmidentity3}
		Let $f,g\in C_c^\infty(\Omega)$. Then the following identity holds:
		\begin{equation}
			\int_\Omega f(x)g(x) dx=2\int_{\partial\Omega}\int_0^\infty v(x,t)\nd u(x,t) dt d\sigma(x)+\int_\Omega (\K_\Omega f)(x)g(x)dx,
		\end{equation}
		where $u$ is the solution of \eqref{eq:waveeq} with initial data $(f,0)$ and $v$ the solution of \eqref{eq:waveeq2} with initial data $(0,g)$.
	\end{prop}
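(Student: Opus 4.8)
The plan is to pair the two solutions $u$ and $v$ through Green's identities in space and time, to reduce the resulting interior integrals to Lemma~\ref{lem:lemidentity2} applied to \emph{derivatives} of the data, and finally to absorb the remaining discrepancy into the operator $\K_\Omega$.

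First I would record the decay needed to discard all contributions at $t\to\infty$. The bounds for $u$ and $v$ established in the proof of Lemma~\ref{lem:lemidentity2}, together with the analogous bounds for $\partial_t u,\partial_t v$ obtained by differentiating \eqref{eq:wavesol1}--\eqref{eq:wavesol2} under the integral sign, show that $v\,\partial_t u$ and $u\,\partial_t v$ decay like $t^{-4}$ uniformly on $\overline\Omega$, so every space--time boundary term at infinity vanishes. With this in hand, I would apply Green's first identity in $x$ for fixed $t$, namely $\int_{\partial\Omega}v\,\nd u\,d\sigma=\int_\Omega\bigl(\scp{\nabla u,\nabla v}+v\,\partial_t^2u\bigr)dx$, and then integrate by parts in $t$ using $v(\cdot,0)=0$, to obtain
\[
\int_{\partial\Omega}\int_0^\infty v\,\nd u\,dt\,d\sigma=\int_\Omega\int_0^\infty\bigl(\scp{\nabla u,\nabla v}-\partial_t u\,\partial_t v\bigr)dt\,dx.
\]

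Both interior integrals then fall under Lemma~\ref{lem:lemidentity2}. Since $\partial_{x_i}u$ solves \eqref{eq:waveeq} with data $(\partial_{x_i}f,0)$ and $\partial_{x_i}v$ solves \eqref{eq:waveeq2} with data $(0,\partial_{x_i}g)$, the lemma evaluates $\int_\Omega\int_0^\infty\scp{\nabla u,\nabla v}$ as the sum over $i$ of its right-hand side with $(f,g)$ replaced by $(\partial_{x_i}f,\partial_{x_i}g)$; and since $\partial_t^2 u=\Delta u$ solves \eqref{eq:waveeq} with data $(\Delta f,0)$, one more integration by parts in $t$ gives $\int_\Omega\int_0^\infty\partial_t u\,\partial_t v=-\int_\Omega\int_0^\infty(\Delta u)\,v$, which the lemma evaluates with $(f,g)$ replaced by $(\Delta f,g)$. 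Writing $K_0(x,y)\coloneqq (\Hilbert_s\Radon\chi_\Omega)(\tilde{n}(x,y),\tilde{s}(x,y))/\norm{x-y}$ for the kernel of Lemma~\ref{lem:lemidentity2} and integrating by parts in $x$ and $y$ to move every derivative off $f$ and $g$ (no boundary terms, as $f,g\in C_c^\infty(\Omega)$), this represents
\[
2\int_{\partial\Omega}\int_0^\infty v\,\nd u\,dt\,d\sigma=-\frac{1}{4\pi^2}\int_\Omega\int_\Omega f(x)g(y)\Bigl(\textstyle\sum_i\partial_{x_i}\partial_{y_i}K_0+\Delta_x K_0\Bigr)dx\,dy.
\]

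It remains to establish the distributional kernel identity reconciling this with the claim: that $\sum_i\partial_{x_i}\partial_{y_i}K_0+\Delta_xK_0$ equals $-4\pi^2\,\delta(x-y)$ plus a constant multiple of the $\K_\Omega$-kernel $(\partial_s^2\Hilbert_s\Radon\chi_\Omega)(\tilde{n},\tilde{s})/\norm{x-y}$. The geometric input is that $\set{z:\scp{z,\tilde{n}(x,y)}=\tilde{s}(x,y)}$ is precisely the perpendicular bisector of $x$ and $y$, since $\scp{z,y-x}=(\norm{y}^2-\norm{x}^2)/2$ is equivalent to $\norm{z-x}=\norm{z-y}$; consequently, differentiating the composition $\psi(\tilde{n}(x,y),\tilde{s}(x,y))$ with $\psi\coloneqq\Hilbert_s\Radon\chi_\Omega$ in $x$ and $y$ converts the spatial derivatives into $\partial_s$-derivatives of $\psi$, while the singular factor $1/\norm{x-y}$ is responsible, upon applying the second-order operator, for the Dirac contribution on the diagonal that yields the leading term $\int_\Omega fg$. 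This last step is the main obstacle: one must carry the chain rule through $\tilde{n}$ and $\tilde{s}$ and cleanly separate the singular ($\delta$) part from the smooth ($\partial_s^2\psi$) part. I expect the non-translation-invariance of $K_0$ --- caused by the dependence of $\tilde{s}$ on $\norm{x}^2$ and $\norm{y}^2$ rather than on $x-y$ alone --- to be exactly what prevents the two second-order terms from cancelling and forces the appearance of $\K_\Omega$.
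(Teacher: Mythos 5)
Your reduction is a genuinely different route from the paper's: the paper uses Green's \emph{second} identity together with the splitting $v\,\nd u-u\,\nd v=2v\,\nd u-\nd(uv)$ and the divergence theorem, whereas you use Green's \emph{first} identity, so the two routes end with different second-order operators acting on the kernel of Lemma~\ref{lem:lemidentity2} (your $K_0$). Your opening steps are sound: the decay estimates, the identity $\int_{\partial\Omega}\int_0^\infty v\,\nd u\,dt\,d\sigma=\int_\Omega\int_0^\infty\left(\scp{\nabla u,\nabla v}-\partial_tu\,\partial_tv\right)dt\,dx$, the application of Lemma~\ref{lem:lemidentity2} to the pairs $(\partial_if,\partial_ig)$ and $(\Delta f,g)$, and the passage to distributional derivatives of $K_0$ are all legitimate. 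The gap is that the argument stops exactly where the content lies: the distributional identity
\begin{equation*}
\sum_i\partial_{x_i}\partial_{y_i}K_0+\Delta_xK_0=-4\pi^2\,\delta(x-y)+c\,\frac{\left(\partial_s^2\Hilbert_s\Radon\chi_\Omega\right)\left(\tilde{n}(x,y),\tilde{s}(x,y)\right)}{\norm{x-y}}
\end{equation*}
is conjectured but not proved, and even the constant $c$ is left undetermined. This is not a routine verification: your operator is \emph{not} tangential to the diagonal, so $\Delta_xK_0$ genuinely involves second derivatives of the factor $1/\norm{x-y}$, which are not locally integrable, and extracting the Dirac term together with its coefficient requires quantitative control of $\Hilbert_s\Radon\chi_\Omega$ near the diagonal; the heuristic that ``the singular factor produces the delta'' pins down neither constant.

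The identity is in fact true, with $c=\tfrac12$, but the natural proof of it consists precisely of the two computations that make up the paper's proof, so your scheme defers rather than performs the essential work. Split the operator as $\sum_i\partial_{x_i}\partial_{y_i}+\Delta_x=\tfrac12(\nabla_x+\nabla_y)^2+\tfrac12(\Delta_x-\Delta_y)$. The first half \emph{is} tangential to the diagonal: $(\nabla_x+\nabla_y)$ annihilates $\norm{x-y}^{-1}$ and $\tilde{n}(x,y)$ and maps $\tilde{s}$ to $\tilde{n}$, so $(\nabla_x+\nabla_y)^2K_0=\left(\partial_s^2\Hilbert_s\Radon\chi_\Omega\right)(\tilde{n},\tilde{s})/\norm{x-y}$ holds pointwise and, since no derivative ever falls on the singular factor, also distributionally, with no Dirac contribution; this is exactly the chain-rule computation at the end of the paper's proof of Proposition~\ref{prop:thmidentity3}. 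For the second half, pair with $f\otimes g$ and read Lemma~\ref{lem:lemidentity2} backwards:
\begin{equation*}
\tfrac12\left\langle(\Delta_x-\Delta_y)K_0,\,f\otimes g\right\rangle=-4\pi^2\int_\Omega\int_0^\infty\left(v\,\Delta u-u\,\Delta v\right)dt\,dx=-4\pi^2\int_\Omega f(x)g(x)\,dx,
\end{equation*}
where the last equality uses $\Delta u=\partial_t^2u$, $\Delta v=\partial_t^2v$ and integration by parts in $t$ with $v(\cdot,0)=\partial_tu(\cdot,0)=0$, $u(\cdot,0)=f$, $\partial_tv(\cdot,0)=g$ --- which is step (i) of the paper's proof. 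In other words, the Dirac term in your kernel identity is produced by the initial conditions of the wave equation, not by any intrinsic property of the kernel singularity, and closing your gap amounts to reassembling the paper's argument.
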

	
	\begin{proof}
		\begin{enumerate}[wide=\parindent,label=(\roman*)]
			\item We first show that
			\begin{equation}
				\label{eq:identity3}
				\int_\Omega f(x)g(x) dx=\int_0^\infty \int_\Omega v(x,t)\Delta u(x,t)-u(x,t)\Delta v(x,t)dxdt.
			\end{equation}
			Application of integration by parts on the two inner integrals yields
			\begin{align*}
				&\int_\Omega\int_0^\infty v(x,t)\Delta u(x,t)dtdx=-\int_\Omega\int_0^\infty \partial_t u(x,t)\partial_t v(x,t)dtdx\quad \text{and}\\
				&\int_\Omega\int_0^\infty u(x,t)\Delta v(x,t)dtdx=-\int_\Omega f(x)g(x)dx-\int_\Omega\int_0^\infty \partial_t u(x,t)\partial_t v(x,t)dtdx.
			\end{align*}
			Therefore, the subtraction of both integrals and changing order of integration lead to the desired result.
			\item In the next step we use Green's second identity on right inner integral in \eqref{eq:identity3} to obtain
			\begin{equation*}
				\int_\Omega v(x,t)\Delta u(x,t)-u(x,t)\Delta v(x,t)dx=\int_{\partial\Omega} v(x,t)\nd u(x,t)-u(x,t)\nd v(x,t) d\sigma(x),
			\end{equation*}
			where the right integral can be written as \[\int_{\partial\Omega}\scp{v(x,t)\nabla u(x,t)-u(x,t)\nabla v(x,t),\nu(x)}d\sigma(x).\] Then the product rule yields the relation
			\begin{align*}
				\int_{\partial\Omega}&\scp{v(x,t)\nabla u(x,t)-u(x,t)\nabla v(x,t),\nu(x)}d\sigma(x)\\
				&=\int_{\partial\Omega}\scp{2v(x,t)\nabla u(x,t)-\left(\nabla uv \right)(x,t),\nu(x)}d\sigma(x)\\
				&=2\int_{\partial\Omega}v(x,t)\nd u(x,t)d\sigma(x)-\int_{\partial\Omega}\scp{\left(\nabla uv\right)(x,t),\nu(x)}d\sigma(x),
			\end{align*}
			From the divergence theorem we conclude that the second integral on the right side can be evaluated to
			\begin{align*}
				\int_{\partial\Omega}\scp{\left(\nabla uv\right)(x,t),\nu(x)}d\sigma(x)&=\int_\Omega \div \left(\nabla uv\right)(x,t) dx\\
				&=\int_\Omega \left(\Delta uv\right)(x,t)dx.
			\end{align*}
			\item Now, it remains to show that
			\begin{equation*}
				\int_\Omega \int_0^\infty \left(\Delta uv\right)(x,t)dtdx=-\int_\Omega (\K_\Omega f)(x)g(x)dx.
			\end{equation*}
			First, we notice that one can easily verify the relation $\Delta uv = v\Delta u+2\scp{\nabla u,\nabla v}+u\Delta v$. Furthermore, from \eqref{eq:waveeq} and \eqref{eq:waveeq2} we observe that $\Delta u$  and $\Delta v$ are the solutions of \eqref{eq:waveeq} and \eqref{eq:waveeq2} with respect to $\Delta f$ and $\Delta g$. The same also holds for the gradients $\nabla u$ and $\nabla v$. This observation and Lemma \ref{lem:lemidentity2} imply then
			\begin{equation}
			\label{eq:identity4}
			\begin{aligned}
			\int_\Omega &\int_0^\infty \left(\Delta uv\right)(x,t)dtdx\\
			&=-\frac{1}{8\pi^2}\int_\Omega\int_\Omega (\nabla_x + \nabla_y)^2\left(f(x)g(y)\right)\frac{\left(\Hilbert_s\Radon \chi_\Omega\right)\left(\tilde{n}(x,y),\tilde{s}(x,y)\right)}{\norm{x-y}}dxdy,
			\end{aligned}
			\end{equation}
			where \[(\nabla_x + \nabla_y)^2f(x)g(y)=g(y)\Delta f(x)+2\scp{\nabla f(x),\nabla g(y)}+f(x) \Delta g(y).\] Next, we apply Green's first identity and integration by parts on the right integral in \eqref{eq:identity4} to deduce
			\begin{align*}
				\int_\Omega &\int_0^\infty \left(\Delta uv\right)(x,t)dtdx\\
			&=\frac{1}{8\pi^2}\int_\Omega\int_\Omega f(x)g(y)(\nabla_x + \nabla_y)^2\frac{\left(\Hilbert_s\Radon \chi_\Omega\right)\left(\tilde{n}(x,y),\tilde{s}(x,y)\right)}{\norm{x-y}}dxdy.
			\end{align*}
			Thus, we are left to show \[(\nabla_x + \nabla_y)^2\frac{\left(\Hilbert_s\Radon \chi_\Omega\right)\left(\tilde{n}(x,y),\tilde{s}(x,y)\right)}{\norm{x-y}}=\frac{\left(\partial_s^2\Hilbert_s\Radon \chi_\Omega\right)\left(\tilde{n}(x,y),\tilde{s}(x,y)\right)}{\norm{x-y}}.\] This is straightforward computation: Applying the chain rule on the numerator yields
			\begin{equation*}
				(\nabla_x + \nabla_y)\left(\Hilbert_s\Radon \chi_\Omega\right)\left(\tilde{n}(x,y),\tilde{s}(x,y)\right)=\left(\partial_s^2\Hilbert_s\Radon \chi_\Omega\right)\left(\tilde{n}(x,y),\tilde{s}(x,y)\right)\frac{y-x}{\norm{x-y}}.
			\end{equation*}
			Then, from product rule and the relation $(\nabla_x + \nabla_y)\norm{x-y}^{-1} = 0$ we conclude
			\begin{equation*}
				(\nabla_x + \nabla_y)\frac{\left(\Hilbert_s\Radon \chi_\Omega\right)\left(\tilde{n}(x,y),\tilde{s}(x,y)\right)}{\norm{x-y}}=\left(\Hilbert_s\Radon \chi_\Omega\right)\left(\tilde{n}(x,y),\tilde{s}(x,y)\right)\frac{y-x}{\norm{x-y}^2}.
			\end{equation*}
			One further application of the chain rule and product rule finally yields then
			\begin{align*}
				(\nabla_x + \nabla_y)^2&\frac{\left(\Hilbert_s\Radon \chi_\Omega\right)\left(\tilde{n}(x,y),\tilde{s}(x,y)\right)}{\norm{x-y}}\\
				&=(\partial_{x_1}+\partial_{y_1})\left(\Hilbert_s\Radon \chi_\Omega\right)\left(\tilde{n}(x,y),\tilde{s}(x,y)\right)\frac{y_1-x_1}{\norm{x-y}^2}\\
				&\quad+(\partial_{x_2}+\partial_{y_2})\left(\Hilbert_s\Radon \chi_\Omega\right)\left(\tilde{n}(x,y),\tilde{s}(x,y)\right)\frac{y_2-x_2}{\norm{x-y}^2}\\
				&=\frac{\left(\partial_s^2\Hilbert_s\Radon \chi_\Omega\right)\left(\tilde{n}(x,y),\tilde{s}(x,y)\right)}{\norm{x-y}}.\qedhere
			\end{align*}
		\end{enumerate}
	\end{proof}

Now we can proof our main result. 	

\begin{proof}[Proof of Theorem \ref{thm:explicitform}]
		First, let $g\in C_c^\infty(\Omega)$ and $v\colon \R^2\times[0,\infty)\to\R$ denote the solution of \eqref{eq:waveeq2} with initial data $(0,g)$. As in Lemma \ref{eq:identity3} stated, we have that
		\begin{equation}
			\label{eq:identity7}
			\int_\Omega f(x)g(x) dx=2\int_{\partial\Omega}\int_0^\infty v(x,t)\nd u(x,t) dt d\sigma(x)+ \int_\Omega (\K_\Omega f)(x)g(x)dx.
		\end{equation}
		By using polar coordinates, Equation~\eqref{eq:wavesol2} and substitution rule we have
		\begin{align*}
			v(x,t)&=\frac{1}{2\pi}\int_0^\infty \int_{\Sp^1} \frac{\chi_{(0,t)}(r)}{\sqrt{t^2-r^2}}g(x+r\omega)d\sigma(\omega) dr\\
			&=\frac{1}{2\pi}\int_{\R^2}\frac{\chi_{(0,t)}(\norm{y})}{\sqrt{t^2-\norm{y}^2}}g(x+y)dy\\
			&=\frac{1}{2\pi}\int_{\R^2}\frac{\chi_{(0,t)}(\norm{x-y})}{\sqrt{t^2-\norm{x-y}^2}}g(y)dy.
		\end{align*}
		Thus, inserting this relation into the second integral in \eqref{eq:identity7} and applying Fubini's theorem yields then
		\begin{align*}
			2\int_{\partial\Omega}\int_0^\infty &v(x,t)\nd u(x,t) dt d\sigma(x)\\
			&=\frac{1}{\pi}\int_{\R^2}g(y)\int_{\partial\Omega}\int_0^\infty\frac{\chi_{(0,t)}(\norm{x-y})}{\sqrt{t^2-\norm{x-y}^2}} \nd u(x,t)dt d\sigma(x) dy\\
			&=\frac{1}{\pi}\int_\Omega g(x)\int_{\partial\Omega}\int_{\norm{x-y}}^\infty \frac{\nd u(y,t)}{\sqrt{t^2-\norm{x-y}^2}}dtd\sigma(y) dx.
		\end{align*}
		This leads finally to
		\begin{equation*}
			\int_\Omega f(x)g(x) dx=\int_{\Omega}\left(\int_{\partial\Omega}\int_{\norm{x-y}}^\infty \frac{\nd u(y,t)}{\sqrt{t^2-\norm{x-y}^2}}dtd\sigma(y)+(\K_\Omega f)(x)\right)g(x)dx.
		\end{equation*}
		Since this identity holds for every test function $g\in C_c^\infty(\Omega)$, the claimed inversion formula \eqref{eq:explictform} holds.
	\end{proof}

	\subsection{Exact formula for Neumann traces on ellipses}
	\label{subsec:exactform}
	
	Next, we present a formula that exactly recovers the initial data $f\colon \R^2\to\R$ from Neumann measurements given in 	
	\eqref{eq:neumannmeas}  in the case that $\Omega$
	is bounded by an ellipse
\begin{equation*}
\partial \Omega = Q \left( \set{ (x_1,x_2) \in \R^2
\Bigm|   \frac{x_1^2}{e_1^2}
+ \frac{x_2^2}{e_2^2} = 1} \right)
\end{equation*}
for numbers   $ e_1, e_ 2 >  0$ and an orthogonal
transform $Q \in \R^{2\times 2}$.

	\begin{theorem}
		\label{thm:exactform}
		Let $\Omega\subset \R^2$ be a circular or elliptical domain and  $f\in C_c^\infty(\Omega)$. Then, for every $x\in\Omega$ we have
		\begin{equation}
			\label{eq:exactformnm}
			f(x)=\frac{1}{\pi}\int_{\partial\Omega}\int_{\norm{x-y}}^\infty \frac{\nd u(y,t)}{\sqrt{t^2-\norm{x-y}^2}}dtd\sigma(y),
		\end{equation}
		where $u\colon \R^2\times[0,\infty)\to\R$ denotes the solution of the wave equation \eqref{eq:waveeq} with initial data $(f,0)$.
	\end{theorem}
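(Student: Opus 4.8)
The plan is to deduce Theorem~\ref{thm:exactform} directly from Theorem~\ref{thm:explicitform} by showing that the smoothing operator $\K_\Omega$ vanishes identically on $C_c^\infty(\Omega)$ when $\Omega$ is a disk or an ellipse. Recalling the definition \eqref{eq:K}, it suffices to prove that the kernel factor $\left(\partial_s^2\Hilbert_s\Radon\chi_\Omega\right)\!\left(\tilde n(x,y),\tilde s(x,y)\right)$ vanishes for almost every $y\in\Omega$ (the diagonal $y=x$ is a null set). The whole argument thus splits into an analytic fact, that $\partial_s^2\Hilbert_s\Radon\chi_\Omega(\theta,s)=0$ on the open interval where the line $\scp{z,\theta}=s$ meets $\Omega$, and a geometric fact, that the pair $(\tilde n(x,y),\tilde s(x,y))$ always lands in this interval.

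\textbf{The Radon transform of an ellipse is a semicircular profile.}
For the analytic fact I would first compute $\Radon\chi_\Omega$. Writing $\Omega=Q(A\B_1)$ with $A=\mathrm{diag}(e_1,e_2)$ and $Q$ orthogonal, I use the rotation invariance $\Radon\chi_{Q\Omega_0}(\theta,s)=\Radon\chi_{\Omega_0}(Q^{-1}\theta,s)$ to reduce to the axis-aligned case, and then substitute the line parametrization $z=s\theta+a\theta^\perp$ into the inequality $\norm{A^{-1}z}<1$. This is quadratic in $a$, and the length of the admissible $a$-interval (which is the chord length) evaluates to $\Radon\chi_\Omega(\theta,s)=C(\theta)\sqrt{a(\theta)^2-s^2}$ for $\abs{s}<a(\theta)$, where $a(\theta)=\max_{z\in\Omega}\scp{z,\theta}=\norm{A\,Q^{-1}\theta}=\sqrt{e_1^2(Q^{-1}\theta)_1^2+e_2^2(Q^{-1}\theta)_2^2}$ is the support value and $C(\theta)>0$. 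The discriminant collapses because the Gram determinant of $A^{-1}\theta$ and $A^{-1}\theta^\perp$ equals the $\theta$-independent constant $(e_1e_2)^{-2}$. The circle is the special case $e_1=e_2$. The essential point is that for each fixed $\theta$ the profile $\Radon\chi_\Omega(\theta,\cdot)$ is \emph{semicircular} in $s$.

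\textbf{The finite Hilbert transform linearizes the profile.}
The key step is the finite Hilbert (Tricomi/airfoil) identity $\frac1\pi\,\mathrm{p.v.}\!\int_{-a}^{a}\frac{\sqrt{a^2-t^2}}{s-t}\,dt=s$ for $\abs{s}<a$, which in the present notation reads $\Hilbert_s\!\left[\sqrt{a(\theta)^2-\cdot^2}\,\right]\!(s)=s$. Combined with the previous step this yields $\Hilbert_s\Radon\chi_\Omega(\theta,s)=C(\theta)\,s$ on $\abs{s}<a(\theta)$, an affine-linear function of $s$, so that $\partial_s^2\Hilbert_s\Radon\chi_\Omega(\theta,s)=0$ there. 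This is precisely the analytic vanishing required, and it is the structural reason why ellipses (and not general convex domains) admit an exact inversion formula.

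\textbf{Geometric placement and conclusion.}
It remains to check that $(\tilde n(x,y),\tilde s(x,y))$ always lies in the open region $\abs{s}<a(\theta)$. For this I observe that the line $\set{z\in\R^2\mid \scp{z,\tilde n(x,y)}=\tilde s(x,y)}$ is exactly the perpendicular bisector of the segment $[x,y]$: the midpoint $m=(x+y)/2$ satisfies $\scp{m,\tilde n(x,y)}=(\norm{y}^2-\norm{x}^2)/(2\norm{x-y})=\tilde s(x,y)$. Since $x,y\in\Omega$ and $\Omega$ is convex, $m\in\Omega$, so this line cuts through the interior of the centrally symmetric $\Omega$, whence $\abs{\tilde s(x,y)}<a(\tilde n(x,y))$. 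Therefore the kernel factor in \eqref{eq:K} vanishes for all $x\neq y$ in $\Omega$, giving $\K_\Omega f\equiv 0$ and, by Theorem~\ref{thm:explicitform}, the exact formula \eqref{eq:exactformnm}. I expect the main obstacle to be carrying out the analytic computation of the second and third paragraphs cleanly—in particular verifying that the $s$-profile is exactly semicircular with a $\theta$-independent discriminant, so that the airfoil identity applies and produces a genuinely linear $\Hilbert_s\Radon\chi_\Omega$; the rotation reduction and the geometric placement are comparatively routine.
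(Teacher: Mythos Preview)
Your argument is correct and follows exactly the reduction used in the paper: invoke Theorem~\ref{thm:explicitform} and then establish $\K_\Omega f\equiv 0$ for elliptical $\Omega$. The only difference is one of completeness: the paper's proof of Theorem~\ref{thm:exactform} simply states that it suffices to show $\K_\Omega f(x)=0$ and then refers to \cite{Hal13,Hal14} for this fact, whereas you supply the computation in full---the semicircular Radon profile $\Radon\chi_\Omega(\theta,s)=C(\theta)\sqrt{a(\theta)^2-s^2}$, the airfoil identity $\Hilbert_s[\sqrt{a^2-\cdot^2}\,](s)=s$ on $\abs{s}<a$, and the midpoint/perpendicular-bisector observation $\scp{(x+y)/2,\tilde n(x,y)}=\tilde s(x,y)$ placing the evaluation point in the interior range. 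This is indeed the mechanism used in those references, so your proposal is a faithful, self-contained expansion of the paper's own proof rather than a different route.
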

	\begin{proof}
		By Theorem \ref{thm:explicitform}, we are left to show that $\K_\Omega(x)=0$ for every $x\in\Omega$. For the proof of this identity, we refer to \cite{Hal13,Hal14}.
	\end{proof}
	
	\subsection{Exact formula for mixed traces on circular domains}
		\label{subsec:exactformneumann}

Finally, we show that the initial data $f\colon \R^2\to\R$ in \eqref{eq:waveeq} can be recovered by any linear combination of the solution of the wave equation $u\colon \R^2\times[0,\infty)\to\R$ and the normal derivative $\nd u$ on circular domains.
	\begin{theorem}\label{thm:neumann}
		Let $\B_\rho^2(z)\subset \R^2$ be an open ball with radius $\rho>0$ and center $z\in\R^2$ in the plane, let $u$ denote the solution of the wave equation \eqref{eq:waveeq} with initial data $(f,0)$, where $f\in C_c^\infty(\B_\rho^2(z))$, and $a \geq 0$ and $b >0$.
		Then, for every $x\in\B_\rho^2(z)$, we have,
		\begin{equation}
			\label{eq:exactformmixed}
			f(x)=\frac{1}{b\pi}\int_{\partial \B_\rho^2(z)}\int_{\norm{x-y}}^\infty \frac{au(y,t)+b\nd u(y,t)}{\sqrt{t^2-\norm{x-y}^2}}dtd\sigma(y) \,.
		\end{equation}
	\end{theorem}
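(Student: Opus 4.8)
The plan is to use linearity together with Theorem~\ref{thm:exactform} to dispose of the Neumann contribution, and then to show that the back-projection of the Dirichlet trace vanishes identically on $\B_\rho^2(z)$. Writing the right-hand side of \eqref{eq:exactformmixed} (with $\Omega=\B_\rho^2(z)$) as the sum of $\frac{a}{b\pi}\int_{\partial\Omega}\int_{\norm{x-y}}^\infty\frac{u(y,t)}{\sqrt{t^2-\norm{x-y}^2}}\,dt\,d\sigma(y)$ and $\frac1\pi\int_{\partial\Omega}\int_{\norm{x-y}}^\infty\frac{\nd u(y,t)}{\sqrt{t^2-\norm{x-y}^2}}\,dt\,d\sigma(y)$, Theorem~\ref{thm:exactform} (a ball is elliptical, so $\K_\Omega=0$) identifies the second summand with $f(x)$. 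Everything thus reduces to proving that $F(x)\coloneqq\int_{\partial\Omega}\int_{\norm{x-y}}^\infty\frac{u(y,t)}{\sqrt{t^2-\norm{x-y}^2}}\,dt\,d\sigma(y)=0$ for all $x\in\Omega$. To analyse $F$ I would test it against an arbitrary $g\in C_c^\infty(\Omega)$ and repeat verbatim the computation from the proof of Theorem~\ref{thm:explicitform}, which used only the explicit form \eqref{eq:wavesol2} of $v$ and in which $\nd u$ played no special role; this gives $\frac1\pi\int_\Omega g(x)F(x)\,dx=2\int_{\partial\Omega}\int_0^\infty u(x,t)v(x,t)\,dt\,d\sigma(x)$. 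Hence $F\equiv0$ is equivalent to the vanishing of the boundary pairing $\int_{\partial\Omega}\int_0^\infty u\,v\,dt\,d\sigma$ for every such $g$.

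The key idea is to recognise this boundary pairing as a radial derivative of the volume pairing already evaluated in Lemma~\ref{lem:lemidentity2}. Assuming $z=0$ (the statement is translation covariant) and writing $\Psi(x)\coloneqq\int_0^\infty u(x,t)v(x,t)\,dt$, which is continuous and integrable by the estimates in the proof of Lemma~\ref{lem:lemidentity2}, the functions $u,v$ are free-space solutions that do not depend on the integration domain, so $\int_{\partial\B_{\rho'}}\Psi\,d\sigma=\frac{d}{d\rho'}\int_{\B_{\rho'}}\Psi\,dx$ for the origin-centred balls $\B_{\rho'}$. For every $\rho'\geq\rho$ Lemma~\ref{lem:lemidentity2} applied with $\Omega=\B_{\rho'}$ (legitimate since $\supp f,\supp g\subset\B_\rho\subset\B_{\rho'}$) evaluates $\int_{\B_{\rho'}}\Psi\,dx=-\frac1{8\pi^2}\int\int f(x)g(y)\frac{(\Hilbert_s\Radon\chi_{\B_{\rho'}})(\tilde{n}(x,y),\tilde{s}(x,y))}{\norm{x-y}}\,dx\,dy$. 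Differentiating in $\rho'$ and evaluating at $\rho'=\rho$ reduces the whole problem to showing that $\partial_{\rho'}(\Hilbert_s\Radon\chi_{\B_{\rho'}})(\tilde{n}(x,y),\tilde{s}(x,y))=0$ at $\rho'=\rho$ for all $x\in\supp f$, $y\in\supp g$.

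This final vanishing is where circular symmetry enters. Since $\Radon\chi_{\B_{\rho'}}(\theta,s)=2\sqrt{\rho'^2-s^2}$ for $\abs s<\rho'$ and $0$ otherwise, independent of $\theta$, its $\rho'$-derivative is the semicircular weight $2\rho'(\rho'^2-s^2)^{-1/2}$ supported on $(-\rho',\rho')$, whose Hilbert transform vanishes throughout the open interval by the classical airfoil (Tricomi) identity $\frac1\pi\,\mathrm{p.v.}\int_{-\rho'}^{\rho'}\frac{ds}{\sqrt{\rho'^2-s^2}\,(s_0-s)}=0$ for $\abs{s_0}<\rho'$. It then remains to check that the evaluation point lies in this interval: the line $\set{w:\scp{w,\tilde{n}(x,y)}=\tilde{s}(x,y)}$ is the perpendicular bisector of the segment $[x,y]$, so $\abs{\tilde{s}(x,y)}$ is the distance from the centre to that bisector, which is at most $\norm{(x+y)/2}<\rho\leq\rho'$ whenever $x,y$ lie in the compact supports inside $\B_\rho$. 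Consequently the integrand vanishes, the boundary pairing is zero, $F\equiv0$, and \eqref{eq:exactformmixed} follows.

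The step I expect to be the main obstacle is the second one: rigorously identifying the boundary integral with the $\rho'$-derivative of the volume pairing and interchanging that derivative with the only conditionally convergent (in $t$) integral defining $\Psi$. I would handle this using the pointwise decay bounds for $u$ and $v$ established in the proof of Lemma~\ref{lem:lemidentity2}, together with the explicit, $\rho'$-smooth form of $\Radon\chi_{\B_{\rho'}}$ away from $\abs s=\rho'$; the airfoil identity and the bisector-distance estimate are then routine. As with the vanishing of $\K_\Omega$ in Theorem~\ref{thm:exactform}, it is precisely the $\theta$-independence of $\Radon\chi_{\B_{\rho'}}$ that makes the argument collapse, and I would not expect it to persist for non-circular domains.
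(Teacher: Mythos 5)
Your proposal is correct, and it reaches the result by a genuinely different route than the paper. The top-level decomposition is identical: split the mixed data into its Dirichlet and Neumann parts, identify the Neumann summand with $f(x)$ via Theorem~\ref{thm:exactform}, and reduce everything to the vanishing of the Dirichlet back-projection $F$ on the disk --- which is precisely the paper's Lemma~\ref{lem:lemidentity3}. The difference lies entirely in how that vanishing (a range condition for the Dirichlet trace) is proved. The paper imports the two inversion identities \eqref{eq:identity5} and \eqref{eq:identity6} for the unit disk from \cite{FinHalRak07}, expands the product rule, subtracts, and uses $\partial_t v=u$; this is short but rests on a nontrivial external result. You instead stay inside the paper's own machinery: dualizing $F$ against $g\in C_c^\infty(\Omega)$ exactly as in the proof of Theorem~\ref{thm:explicitform} (where, as you correctly observe, $\nd u$ plays no special role), you identify $\frac{1}{\pi}\int_\Omega gF\,dx$ with the boundary pairing $2\int_{\partial\B_\rho^2(0)}\Psi\,d\sigma$, where $\Psi(x)=\int_0^\infty u(x,t)v(x,t)\,dt$; you then realize this pairing as the radial derivative of the volume pairing of Lemma~\ref{lem:lemidentity2} applied to the enlarged disks $\B_{\rho'}^2(0)$, $\rho'\ge\rho$ (legitimate, since $u,v$ are free-space solutions and $\supp{f},\supp{g}\subset\B_\rho^2(0)$), and kill that derivative using the explicit Radon transform of the disk and the airfoil identity, with the bisector estimate $\abs{\tilde{s}(x,y)}\le\norm{(x+y)/2}<\rho$ correctly placing the evaluation point strictly inside the cut. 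Your route buys self-containedness and makes transparent why circular symmetry is essential; its cost is the analytic bookkeeping you flag yourself (continuity of $\Psi$, the coarea step, differentiation under a principal-value integral).

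That last obstacle --- the one you single out as the main difficulty --- can in fact be removed entirely: for $\abs{s}<\rho'$ one has the classical evaluation
\begin{equation*}
	\left(\Hilbert_s\Radon \chi_{\B_{\rho'}^2(0)}\right)(\theta,s)
	=\frac{1}{\pi}\,\mathrm{p.v.}\int_{-\rho'}^{\rho'}\frac{2\sqrt{\rho'^2-t^2}}{s-t}\,dt=2s,
\end{equation*}
independent of $\theta$ \emph{and of} $\rho'$ (this is the same fact that makes $\K_\Omega$ vanish in Theorem~\ref{thm:exactform}). Since $\abs{\tilde{s}(x,y)}<\rho\le\rho'$ uniformly for $(x,y)\in\supp{f}\times\supp{g}$, the right-hand side of Lemma~\ref{lem:lemidentity2} with $\Omega=\B_{\rho'}^2(0)$ is literally constant in $\rho'\in[\rho,\infty)$. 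Hence the volume pairing $\rho'\mapsto\int_{\B_{\rho'}^2(0)}\Psi\,dx$ is constant, and writing it in polar coordinates as $\int_0^{\rho'}\bigl(\int_{\Sp^1}\Psi(r\omega)\,r\,d\sigma(\omega)\bigr)dr$ with a continuous integrand forces $\int_{\partial\B_{\rho'}^2(0)}\Psi\,d\sigma=0$ for every $\rho'\ge\rho$; no interchange of $\partial_{\rho'}$ with a principal value is ever needed. With this substitution your argument is complete modulo the routine decay and continuity estimates, which are exactly those established in step (i) of the proof of Lemma~\ref{lem:lemidentity2} and hold uniformly for $x$ in any bounded set.
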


The proof of Theorem~\eqref{thm:neumann} uses the following
lemma, which  is a range condition for the Dirichlet trace on a circle
of the wave equation.

	\begin{lemma}
		\label{lem:lemidentity3}
		Let $f\in C_c^\infty(\B_\rho^2(z))$, where $\rho>0$ and $z\in\R^2$, and $u\colon \R^2\times[0,\infty)\to\R$ the solution of wave equation \eqref{eq:waveeq} with initial data $(f,0)$. Then for every $x\in\B_\rho^2(z)$ we have
		\begin{equation*}
			0=\int_{\partial\B_\rho^2(z)}\int_{\norm{x-y}}^\infty \frac{u(y,t)}{\sqrt{t^2-\norm{x-y}^2}}dtd\sigma(y).
		\end{equation*}

	\end{lemma}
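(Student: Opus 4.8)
The plan is to reduce the claimed identity to a statement about the \emph{volume} pairing controlled by Lemma~\ref{lem:lemidentity2}, and then to exploit that for a disk the relevant kernel does not depend on the radius.

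First I would remove the center by translation. Since the right-hand side depends on the data only through $\norm{x-y}$ and the solution $u$, and the wave equation is translation invariant, it suffices to treat $z=0$; write $\Omega=\B_\rho^2(0)$. Next, exactly as in the proof of Theorem~\ref{thm:explicitform}, I would test against an arbitrary $g\in C_c^\infty(\Omega)$: inserting the representation $v(x,t)=\tfrac{1}{2\pi}\int_{\R^2}\chi_{(0,t)}(\norm{x-y})(t^2-\norm{x-y}^2)^{-1/2}g(y)\,dy$ for the solution $v$ of \eqref{eq:waveeq2} and applying Fubini's theorem turns the boundary pairing into the back-projection
\[
\int_{\partial\Omega}\int_0^\infty u(x,t)v(x,t)\,dt\,d\sigma(x)
=\frac{1}{2\pi}\int_\Omega g(x)\left(\int_{\partial\Omega}\int_{\norm{x-y}}^\infty\frac{u(y,t)}{\sqrt{t^2-\norm{x-y}^2}}\,dt\,d\sigma(y)\right)dx.
\]
Because the inner back-projection is a continuous function of $x$, it therefore suffices to prove that the left-hand boundary pairing vanishes for every $g\in C_c^\infty(\Omega)$.

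To this end I would pass from the boundary to a family of solid disks. For every radius $r\ge\rho$ one has $f,g\in C_c^\infty(\B_r^2(0))$, so Lemma~\ref{lem:lemidentity2} applies with $\Omega$ replaced by $\B_r^2(0)$ and yields
\[
\Phi(r):=\int_{\B_r^2(0)}\int_0^\infty u(x,t)v(x,t)\,dt\,dx
=-\frac{1}{8\pi^2}\int_{\Omega}\int_{\Omega}f(x)g(y)\frac{(\Hilbert_s\Radon\chi_{\B_r^2(0)})(\tilde n(x,y),\tilde s(x,y))}{\norm{x-y}}\,dx\,dy,
\]
where the outer integrations may be restricted to $\Omega=\B_\rho^2(0)$ since $f,g$ are supported there. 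The crucial observation is that this expression does not depend on $r$. Indeed, $\Radon\chi_{\B_r^2(0)}(\theta,s)=2\sqrt{r^2-s^2}$ for $\abs{s}<r$ and $0$ otherwise, and the classical finite Hilbert transform identity gives $(\Hilbert_s\Radon\chi_{\B_r^2(0)})(\theta,s)=2s$ for all $\abs{s}<r$. On the other hand, for $x,y\in\B_\rho^2(0)$ one has $\abs{\tilde s(x,y)}=\frac{\abs{\norm{y}-\norm{x}}(\norm{x}+\norm{y})}{2\norm{x-y}}\le\frac{\norm{x}+\norm{y}}{2}<\rho\le r$, so the argument always lies in the regime where the kernel equals $2\tilde s(x,y)$. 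Hence the integrand reduces to $f(x)g(y)(\norm{y}^2-\norm{x}^2)/\norm{x-y}^2$, which is independent of $r$, and therefore $\Phi$ is constant on $[\rho,\infty)$.

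Finally, since $x\mapsto\int_0^\infty u(x,t)v(x,t)\,dt$ is continuous (the solutions are smooth and the decay estimates from the proof of Lemma~\ref{lem:lemidentity2} make the $t$-integral locally uniformly convergent), $\Phi$ is continuously differentiable with $\Phi'(r)=\int_{\partial\B_r^2(0)}\int_0^\infty u(x,t)v(x,t)\,dt\,d\sigma(x)$ by the coarea formula. As $\Phi$ is constant, its right derivative at $\rho$ vanishes, i.e.\ the boundary pairing is $0$ for the fixed $g$; since $g\in C_c^\infty(\Omega)$ was arbitrary, the back-projection vanishes identically, which is the assertion. I expect the finite Hilbert transform computation together with the bound $\abs{\tilde s(x,y)}<\rho$ — that is, verifying that the disk kernel equals $2s$ on the whole relevant range and is hence $r$-independent — to be the main point; the differentiation in $r$, the reduction against test functions, and the translation to $z=0$ are routine once this is in place.
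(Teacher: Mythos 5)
Your proof is correct, but it follows a genuinely different route from the paper's. The paper reduces to the unit disk by translation and rescaling, imports the two exact inversion identities \eqref{eq:identity5} and \eqref{eq:identity6} from \cite{FinHalRak07} (involving $t\partial_t^2 v$ and $\partial_t(t\partial_t v)$ for the solution $v$ with data $(0,f)$), subtracts them via the product rule to isolate the term with $\partial_t v$, and then uses $\partial_t v = u$. You instead stay entirely inside the paper's own machinery: the duality reduction against test functions $g$ (the same manipulation as in the proof of Theorem~\ref{thm:explicitform}, with $\nd u$ replaced by $u$), followed by an application of Lemma~\ref{lem:lemidentity2} to the whole family of concentric disks $\B_r^2(0)$, $r\ge\rho$. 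Your two key computations are sound: the finite Hilbert transform identity $\left(\Hilbert_s\Radon\chi_{\B_r^2(0)}\right)(\theta,s)=2s$ for $\abs{s}<r$ (Tricomi's identity, with the paper's sign convention $\frac{1}{\pi}\,\mathrm{p.v.}\int\varphi(\theta,t)/(s-t)\,dt$, and consistent with the paper's claim that $\K_\Omega=0$ for disks), and the bound $\abs{\tilde s(x,y)}\le(\norm{x}+\norm{y})/2<\rho$ from the reverse triangle inequality, which ensures the kernel value $2\tilde s(x,y)$ is independent of $r$. Constancy of the volume pairing $\Phi(r)$ plus the coarea differentiation then forces the boundary pairing over $\partial\B_\rho^2(0)$ to vanish for every $g$, which is the assertion. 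What the two approaches buy: the paper's proof is shorter but rests on two nontrivial external results specific to spheres; yours is self-contained modulo Lemma~\ref{lem:lemidentity2} and a classical one-dimensional identity, and it exposes the actual mechanism — the disk kernel is linear in $s$, hence unchanged under enlarging the disk — which is precisely the same fact that makes the error operator $\K_\Omega$ vanish in Theorem~\ref{thm:exactform}. The only items you wave at rather than prove (continuity of the back-projection, local uniform convergence of the $t$-integral justifying the $C^1$ property of $\Phi$, Fubini) are routine and of the same nature as details the paper itself leaves implicit.
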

	\begin{proof}
It is sufficient  to consider case where $\B_\rho^2(z) =  \B^2$
is the unit ball centered at the origin. The general case follows from translation and rescaling. Suppose $f\in C_c^\infty(\B^2)$ has compact support in the open unit ball $\B^2$. For  this case, in \cite{FinHalRak07} the identities
		\begin{align}
			f(x)&=-\frac{1}{\pi}\int_{\Sp^1}\int_{\norm{x-y}}^\infty \frac{t \partial_t^2 v(y,t)}{\sqrt{t^2-\norm{x-y}^2}}dtd\sigma(y)\label{eq:identity5}\\
			f(x)&=-\frac{1}{\pi}\int_{\Sp^1}\int_{\norm{x-y}}^\infty \frac{\partial_t (t\partial_t v)(y,t)}{\sqrt{t^2-\norm{x-y}^2}}dtd\sigma(y) \,,\label{eq:identity6}
		\end{align}
		for any $x\in \B^2$, have been shown, where $v\colon \R^2\times[0,\infty)\to\R$ is the solution of the wave equation \eqref{eq:waveeq2} with initial data $(0,f)$. Thus, by using product rule in \eqref{eq:identity6} and identity \eqref{eq:identity5} we have for every $x\in\B^2$
		\begin{align*}
			f(x)&=-\frac{1}{\pi}\int_{\Sp^1}\int_{\norm{x-y}}^\infty \frac{\partial_t v(y,t)+t\partial_t^2 v(y,t)}{\sqrt{t^2-\norm{x-y}^2}}dtd\sigma(y)\\
			&=-\frac{1}{\pi}\int_{\Sp^1}\int_{\norm{x-y}}^\infty \frac{\partial_t v(y,t)}{\sqrt{t^2-\norm{x-y}^2}}dtd\sigma(y)+f(x) \,,
		\end{align*}
		which implies
		\begin{equation*}
			0=\int_{\Sp^1}\int_{\norm{x-y}}^\infty \frac{\partial_t v(y,t)}{\sqrt{t^2-\norm{x-y}^2}}dtd\sigma(y) \,.
		\end{equation*}
		From \eqref{eq:wavesol1}, \eqref{eq:wavesol2} we also have the identity  $\partial_t v=u$. This shows the claimed identity.
	\end{proof}

We are now ready to proof the inversion formula for
mixed trace as a corollary of the inversion formula
for the Neumann trace  in Theorem \ref{thm:exactform}
and the range condition for the Dirichlet trace
derived in     Lemma \ref{lem:lemidentity3}.

	\begin{proof}[Proof of Theorem \ref{thm:neumann}]
The following holds:
		\begin{itemize}
		\item
		From Lemma \ref{lem:lemidentity3} we have
		for every $x\in\B_\rho^2(z)$
		\begin{equation*}
			0=\frac{1}{b\pi}\int_{\partial\B_\rho^2(z)}\int_{\norm{x-y}}^\infty \frac{au(y,t)}{\sqrt{t^2-\norm{x-y}^2}}dtd\sigma(y) \,.
		\end{equation*}
		\item From Theorem \ref{thm:exactform} we obtain for every $x\in\B_\rho^2(z)$
		\begin{equation*}
			f(x)=\frac{1}{b\pi}\int_{\partial\B_\rho^2(z)}\int_{\norm{x-y}}^\infty \frac{b\nd u(y,t)}{\sqrt{t^2-\norm{x-y}^2}}dtd\sigma(y) \,.
		\end{equation*}
		\end{itemize}
Adding the last two displayed  equations  leads  the desired result.
	\end{proof}
	
	\section{Numerical experiments}
	\label{sec:num}
	
	In this section, we give details of the numerical implementation of the derived exact inversion formula \eqref{eq:exactformnm} and show numerical results. The inversion formula \eqref{eq:exactformmixed} with mixed measurements can be implemented in the same way. Throughout this  section, we assume a circular domain $\B_\rho^2(z)$ with radius $\rho>0$ and center $z\in\R^2$.
	
	\subsection{Discretization of initial data and normal derivative}
	
	Suppose that we have given discrete data
	\begin{equation*}
		\mathbf{f}[i,j]=f(x_{i,j}),\quad (i,j)\in \set{1,\ldots,N}^2
	\end{equation*}
	of the initial data $f\in C_c^\infty(\B_\rho^2(z))$, where $N$ is the image size, $\Delta x\coloneqq 2\rho/(N-1)$ the step size and $x_{i,j}\coloneqq (-\rho+z_1+(i-1)\Delta x,-\rho+z_2+(j-1)\Delta x)$.
	 Next, we assume that we have $M\coloneqq \lceil 2\rho\pi/\Delta x \rceil$ detector points located at
	\begin{equation*}
		y_k\coloneqq z+\rho(\cos\varphi_k,\sin\varphi_k)\in \partial\B_\rho^2(z),\quad k\in\set{1,\ldots,M}^2
	\end{equation*}
	where $\varphi_k\coloneqq (k-1)\frac{2\pi}{M-1}$ for $k=1,\ldots,M$. The normal derivative is then discretized by
	\begin{equation*}
		\mathbf{d}[k,l]=\scp{(\cos\varphi_k,\sin\varphi_k),\nabla \mathbf{u}[k,l]},\quad (k,l)\in\set{1,\ldots,M}\times\set{1,\ldots,L}
	\end{equation*}
	where $\mathbf{u}[k,l]$ denotes the solution of wave equation with initial data $(f,0)$ at the point $y_k$ and time $t_l\coloneqq (l-1)\Delta x$, where $L\coloneqq \lfloor T/dx\rfloor+1$ and $T\coloneqq 16\rho$. We remark that we computed the discrete gradient $\nabla \mathbf{u}[k,l]$ by solving the wave equation on the whole grid (with FFT) and using symmetric finite differences to approximate the partial derivatives.
	\subsection{Implementation of inversion formula}
	To implement the inversion formula \eqref{eq:exactformnm}
	\begin{equation*}
			f(x)=\frac{1}{\pi}\int_{\partial\B_\rho^2(z)}\int_{\norm{x-y}}^\infty \frac{\nd u(y,t)}{\sqrt{t^2-\norm{x-y}^2}}dtd\sigma(y)
	\end{equation*}
	for $x\in\B_\rho^2(z)$ we proceed as follows: First, we approximate for all $(k,l)\in\set{1,\ldots,M}\times\set{1,\ldots,L}$ the integral
	\begin{align*}
		\int_{t_l}^{t_L} \frac{\nd u(y_k,t)}{\sqrt{t^2-t_l^2}}dt&=\sum_{j=1}^{L-1}\int_{t_j}^{t_{j+1}} \frac{\nd u(y_k,t)}{\sqrt{t^2-t_l^2}}dt\\
		&\simeq \sum_{j=1}^{L-1}\frac{\nd u(y_k,t_{j+1})}{t_{j+1}}\int_{t_j}^{t_{j+1}} \frac{t}{\sqrt{t^2-t_l^2}}dt
	\end{align*}
	by product integration. The last integral can be evaluated to \[\int_{t_j}^{t_{j+1}} \frac{t}{\sqrt{t^2-t_l^2}}dt=\sqrt{t_{j+1}^2-t_l^2}-\sqrt{t_{j}^2-t_l^2},\] and hence we have
	\begin{equation*}
		\int_{t_l}^{t_L} \frac{\nd u(y_k,t)}{\sqrt{t^2-t_l^2}}dt\simeq\sum_{j=1}^{L-1}\frac{\mathbf{d}[k,j+1]}{t_{j+1}}\left(\sqrt{t_{j+1}^2-t_l^2}-\sqrt{t_{j}^2-t_l^2}\right)\eqqcolon \mathbf{A}[k,l].
	\end{equation*}
	The above inversion formula can then be approximated by
	\begin{equation*}
		\mathbf{f}[i,j]\simeq \mathbf{f_{\mathrm{rec,n}}}[i,j]\coloneqq\frac{2\rho}{M-1}\sum_{k=1}^M\mathrm{interp}(\mathbf{A}[k,\ ],\norm{x_{i,j}-y_k}),
	\end{equation*}
	where we used $\mathrm{interp}(\mathbf{A}[k,\ ],\norm{x_{i,j}-y_k})$ to denote the interpolated value in $\norm{x_{i,j}-y_k}$ of the array $\mathbf{A}[k,\ ]$. The numerical approximation of formula \eqref{eq:exactformmixed} with mixed measurements is denoted by $\mathbf{f_{\mathrm{rec,m}}}$.
	
	Now we present numerical results of the discrete data sets $\mathbf{f_{\mathrm{rec,n}}}\in\R^{N\times N}$ and $\mathbf{f_{\mathrm{rec,m}}}\in\R^{N\times N}$.
	\subsection{Numerical results}
	The numerical approximation of the inversion formula \eqref{eq:exactformnm} mentioned above has been implemented in $\textsc{Matlab}$ and was tested on the head phantom $\mathbf{f}$ presented in Figure \ref{fig:head_phantom} with image size $N=301$, where the support of $f$ is contained in the open unit ball with $\rho=1$ and $z=0$.
	\begin{figure}[h!]
		\centering
		\includegraphics[width=6.5cm]{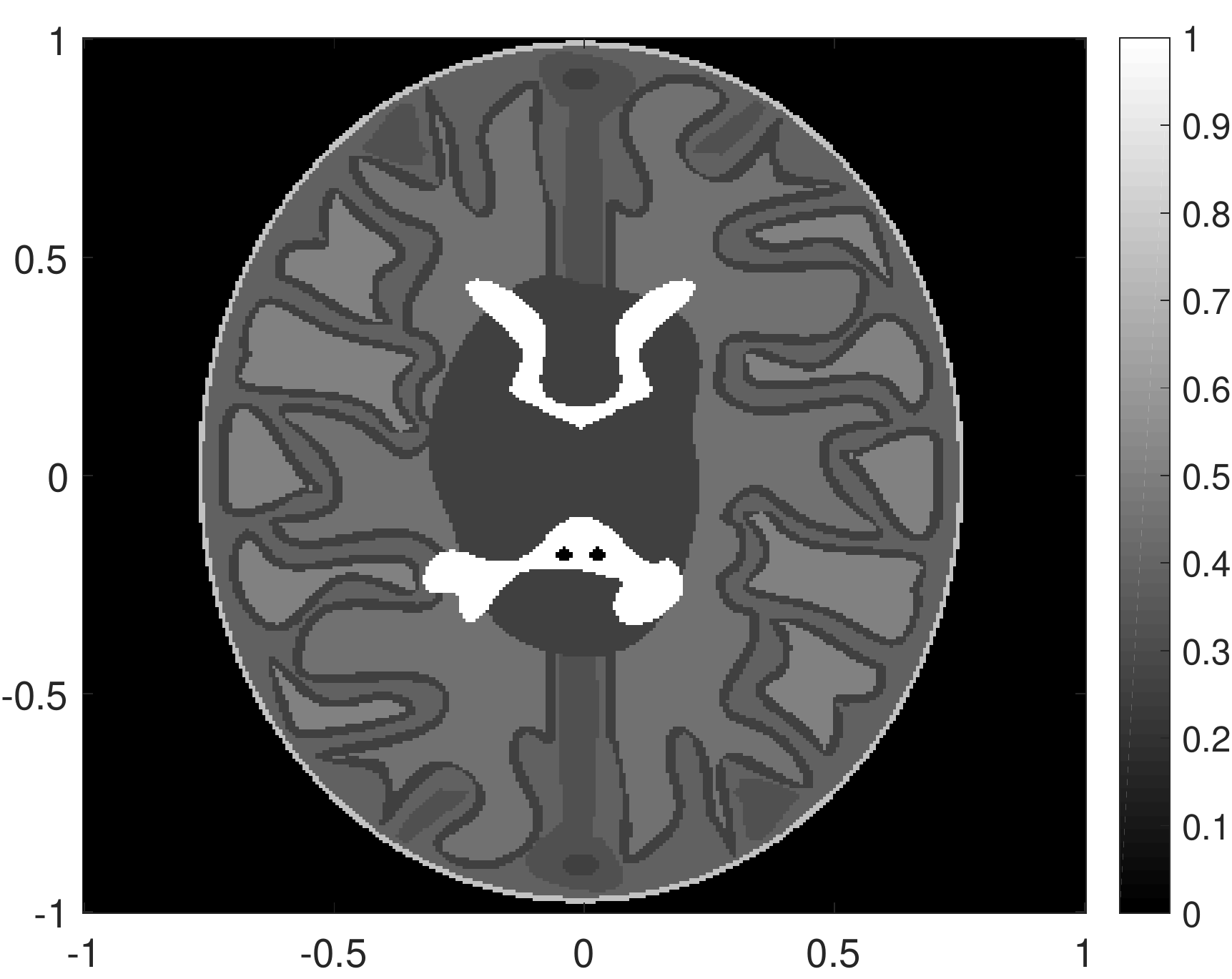}
		\caption{Self-made head phantom implemented in \textsc{Matlab} with cubic splines.}
		\label{fig:head_phantom}
	\end{figure}
	The corresponding Dirichlet and Neumann measurements of the head phantom are shown in the first column in Figure \ref{fig:data}. We additionally tested inversion formula \eqref{eq:exactformmixed} with the weights $a=1$ and $b=2\Delta x$, where the mixed measurements are also presented in the first column in Figure \ref{fig:data}. In the second column the simulated data sets with Gaussian noise (with standard deviation equal to $10\%$ of the maximal value) added are shown.
	\begin{figure}[h!]
		\centering
		\begin{tabular}{cc}
			\includegraphics[width=6.5cm]{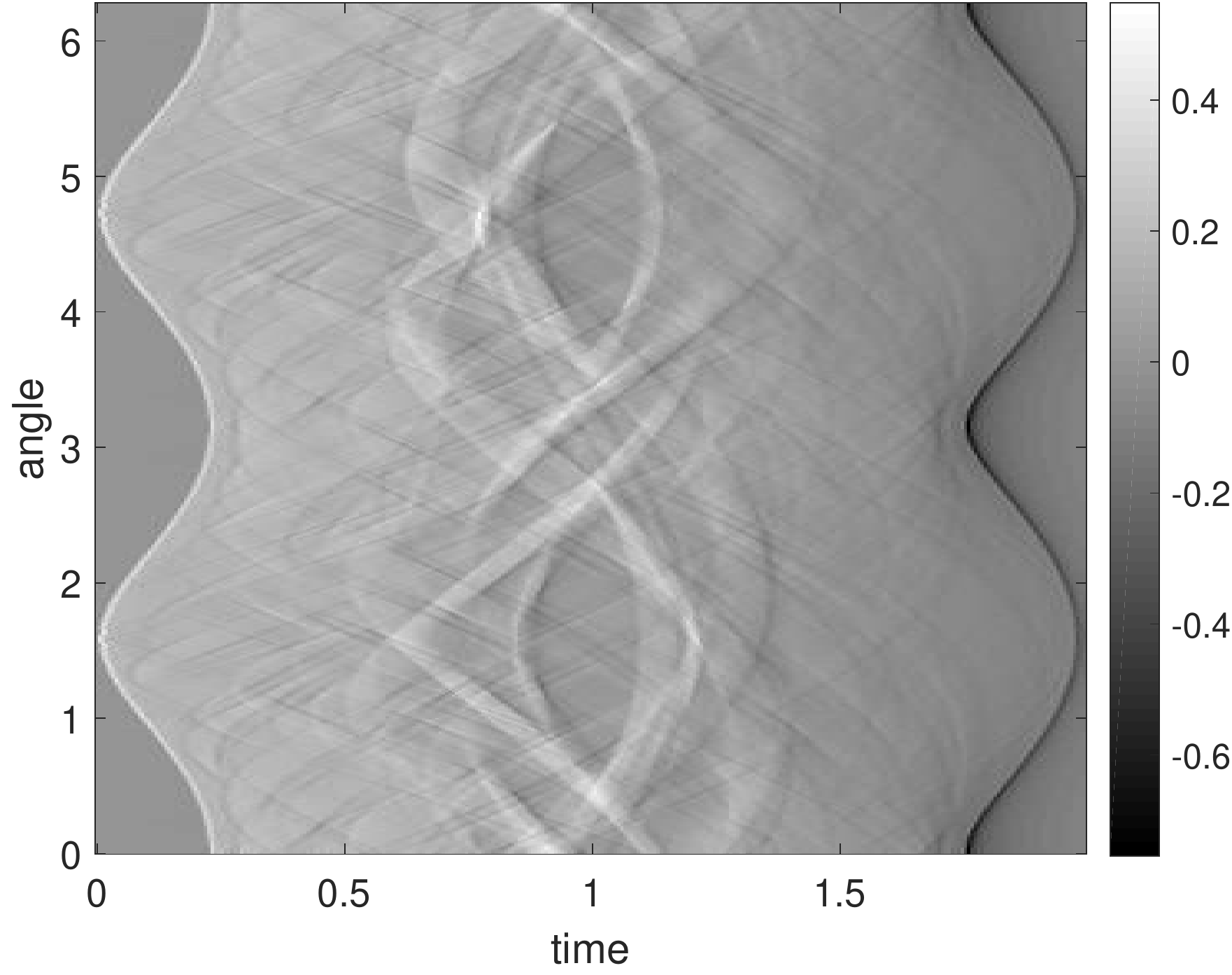}
			&
			\includegraphics[width=6.5cm]{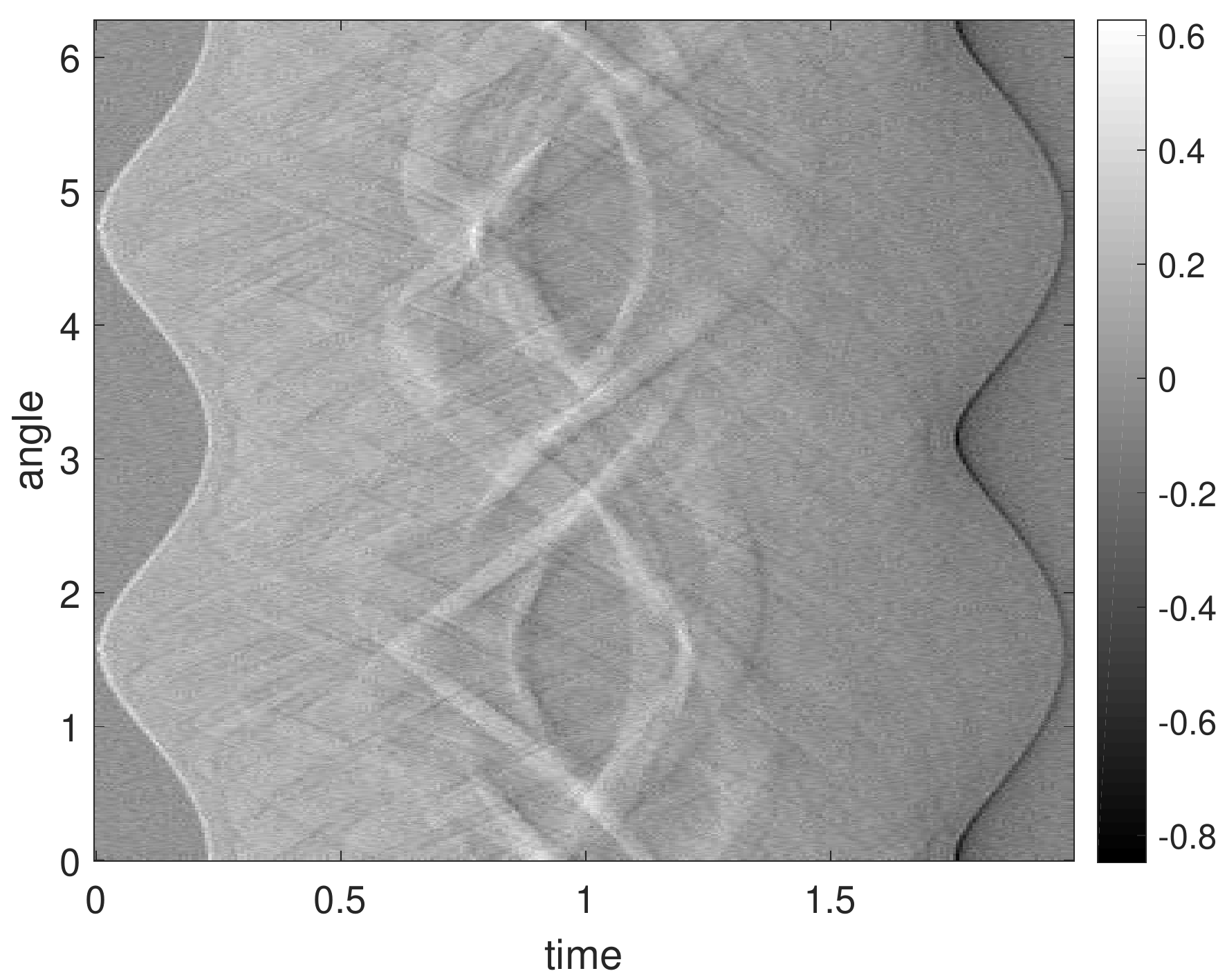} \\
			\includegraphics[width=6.5cm]{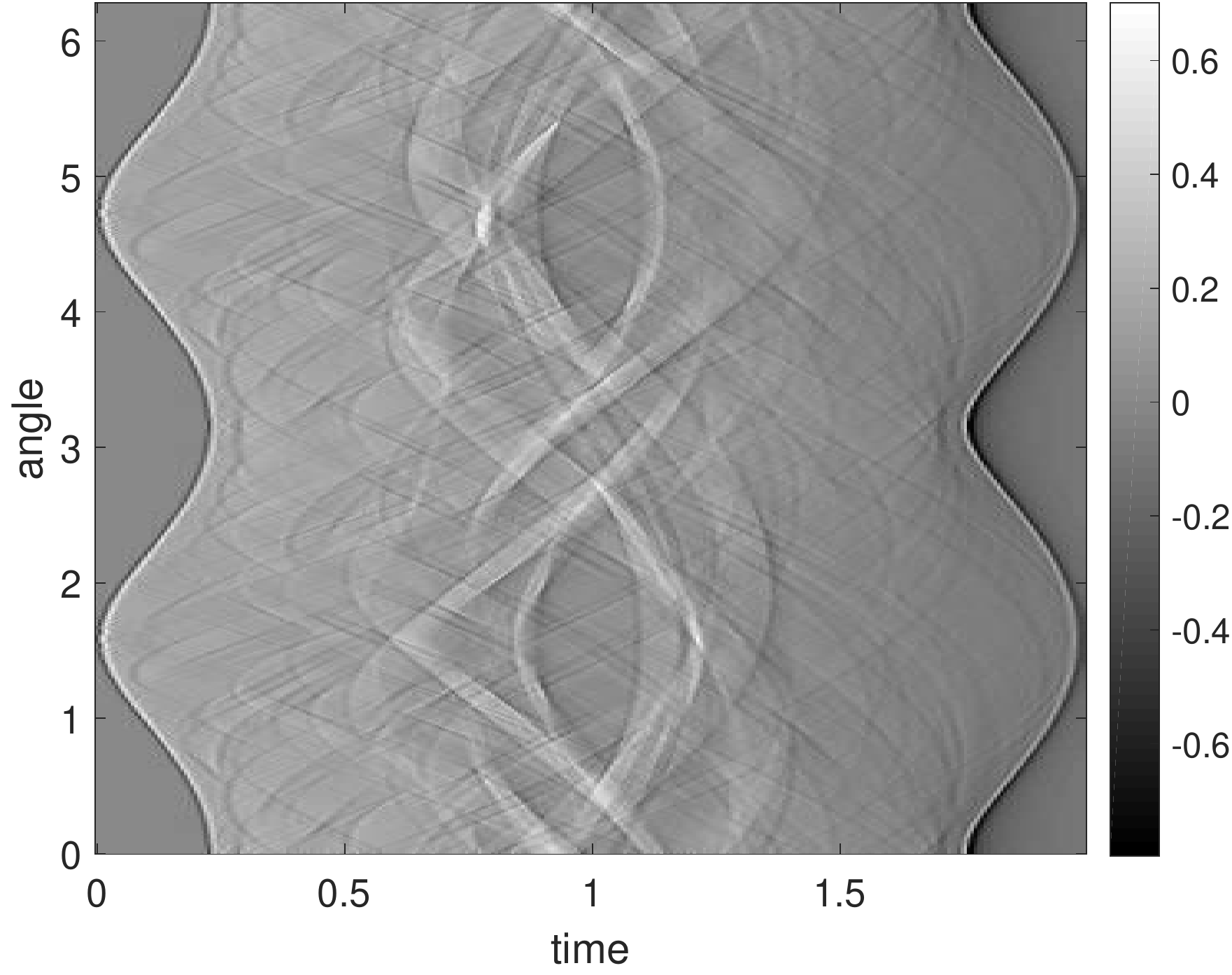}
			&
			\includegraphics[width=6.5cm]{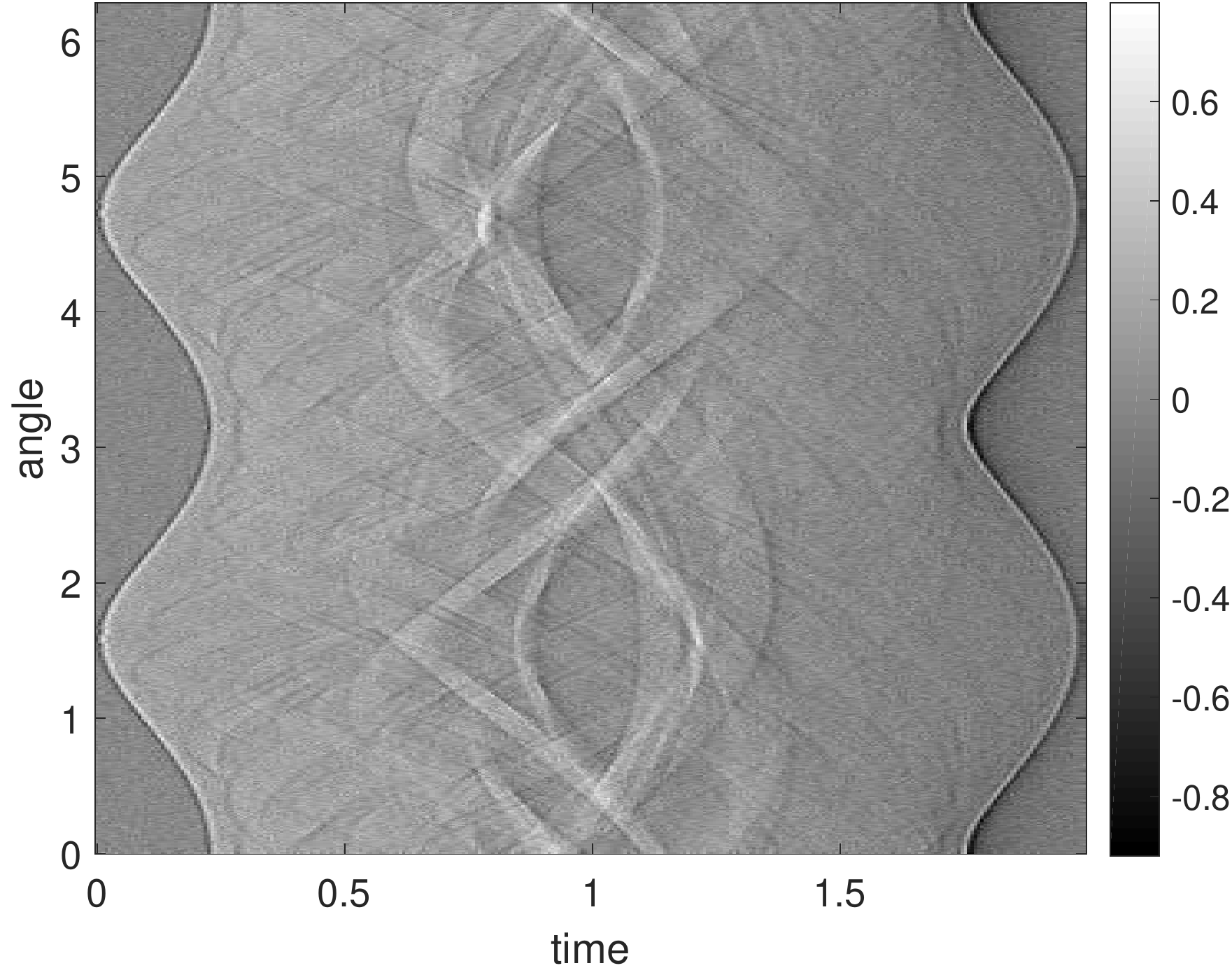} \\
			\includegraphics[width=6.5cm]{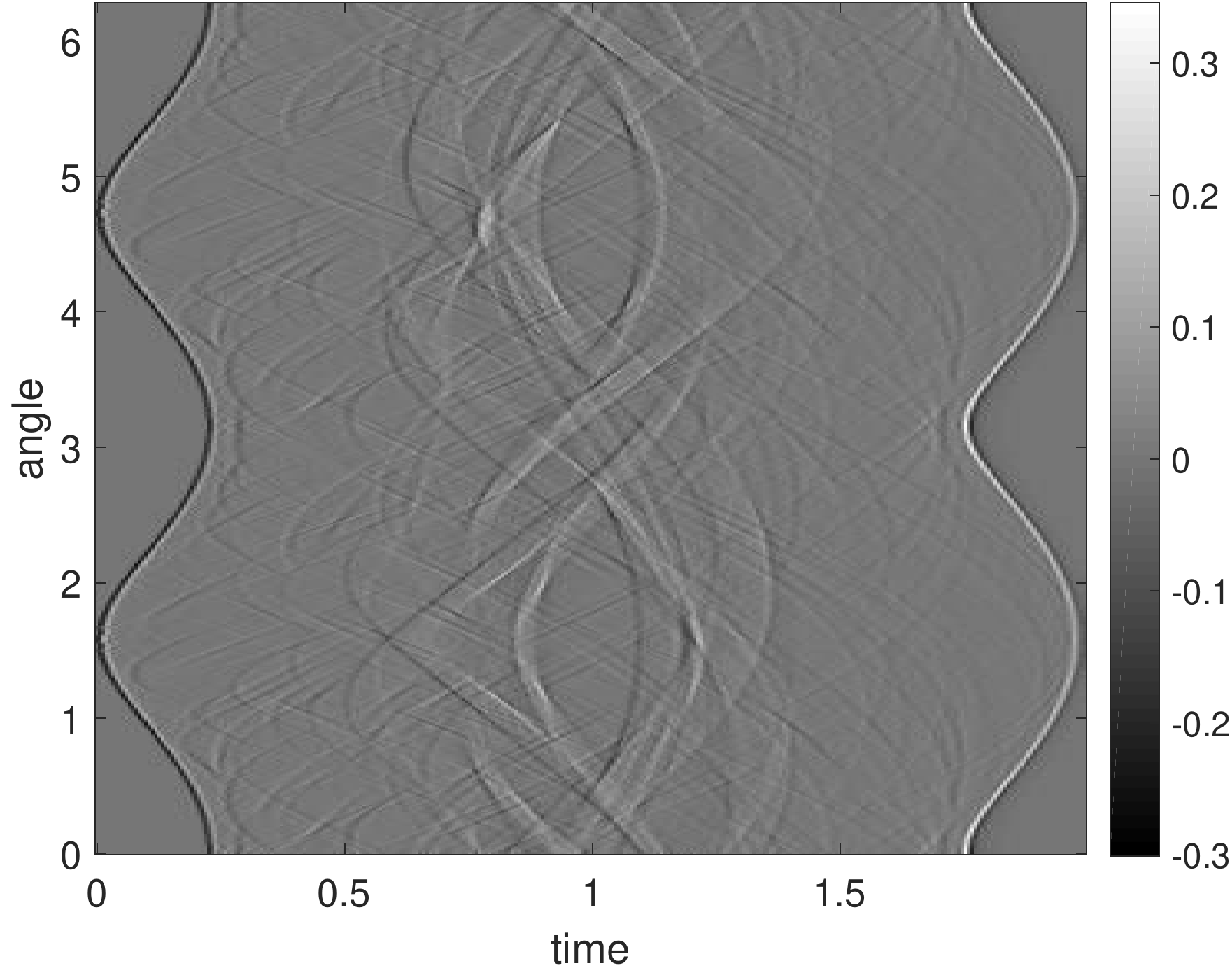}
			&
			\includegraphics[width=6.5cm]{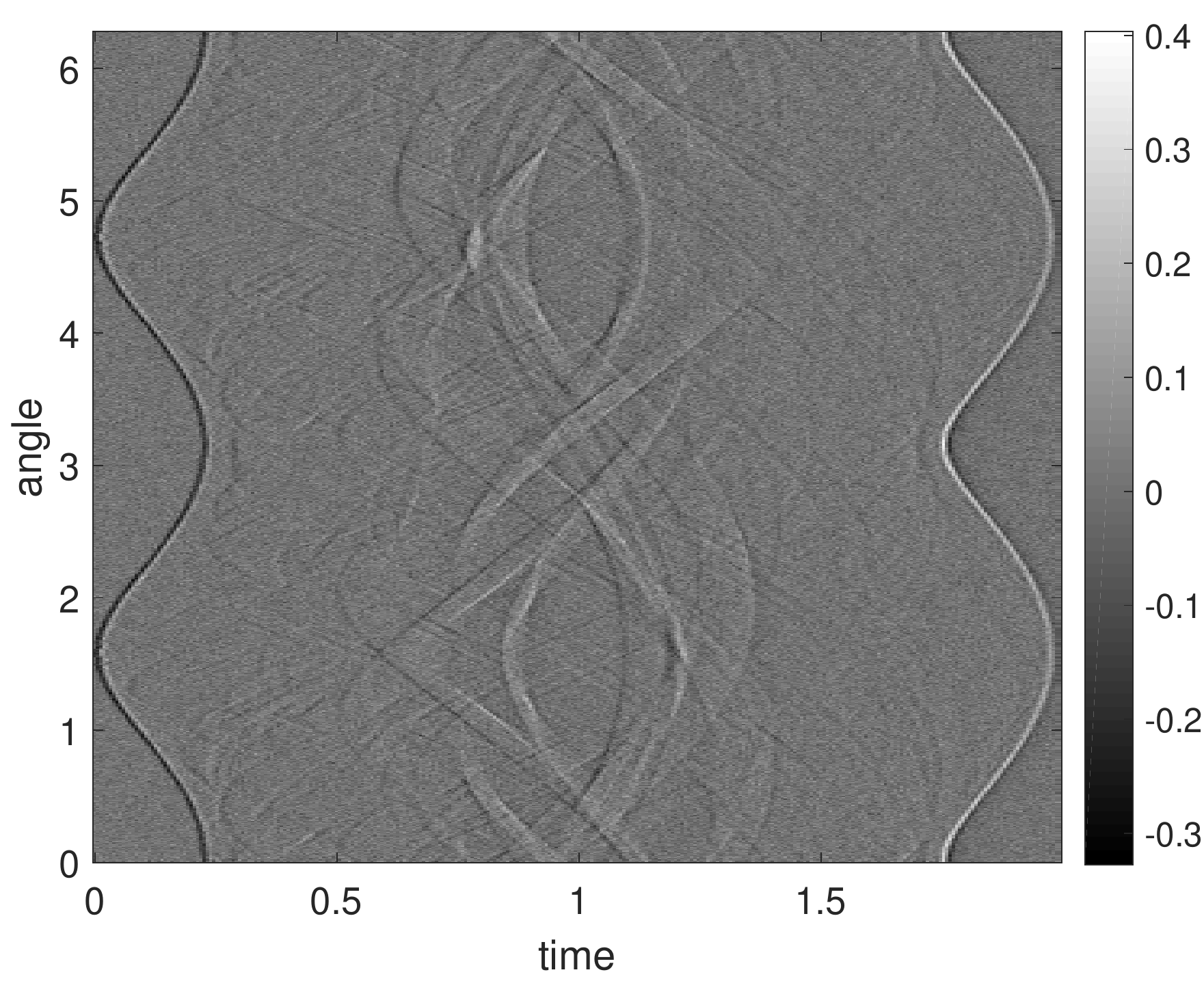}
		\end{tabular}
		\caption{Simulated data: Top, left: Dirichlet measurements $\mathbf{u}$. Top, right: Dirichlet measurements $\mathbf{u}$ with Gaussian noise added. Middle, left: mixed measurements $a\mathbf{u}+b\mathbf{d}$. Middle, right: mixed measurements $a\mathbf{u}+b\mathbf{d}$ with Gaussian noise added. Bottom, left: Neumann measurements $\mathbf{d}$. Bottom, right: Neumann measurements $\mathbf{d}$ with Gaussian noise added.}
		\label{fig:data}
	\end{figure}
	For comparison reasons, we applied the inversion formula
	\begin{equation}
		\label{eq:ubp}
		\frac{1}{a}\int_{\partial\B_\rho^2(z)}\scp{\nu(y),x-y}\int_{\norm{x-y}}^\infty \frac{(\partial_t t^{-1} au)(y,t)}{\sqrt{t^2-\norm{x-y}^2}}dtd\sigma(y),
	\end{equation}
	which exactly recovers the initial data from Dirichlet measurements (see \cite{Hal13,Hal14}), on Neumann measurements and vice versa. We denote the corresponding discrete version of \eqref{eq:ubp} by $\mathbf{f_{\mathrm{rec,d}}}$ (for implementation details, see $\mathbf{f_{\mathrm{rec,n}}}$). The first column in Figure \ref{fig:rec_exact} shows the numerical reconstructions obtained by formula \eqref{eq:ubp} and the three different simulated data sets, whereas the second column illustrates the numerical results of the inversion formulas \eqref{eq:exactformnm} and \eqref{eq:exactformmixed}, respectively. The numerical reconstructions applied to the data sets with $10\%$ Gaussian noise are shown in Figure \ref{fig:rec_noisy10}. We also tested our reconstruction method with $20\%$ Gaussian noise added to the data to observe the behaviour of the single reconstructions with a higher noise rate (see Figure \ref{fig:rec_noisy20}).
	\begin{figure}[h!]
		\centering
		\begin{tabular}{cc}
			\includegraphics[width=6.5cm]{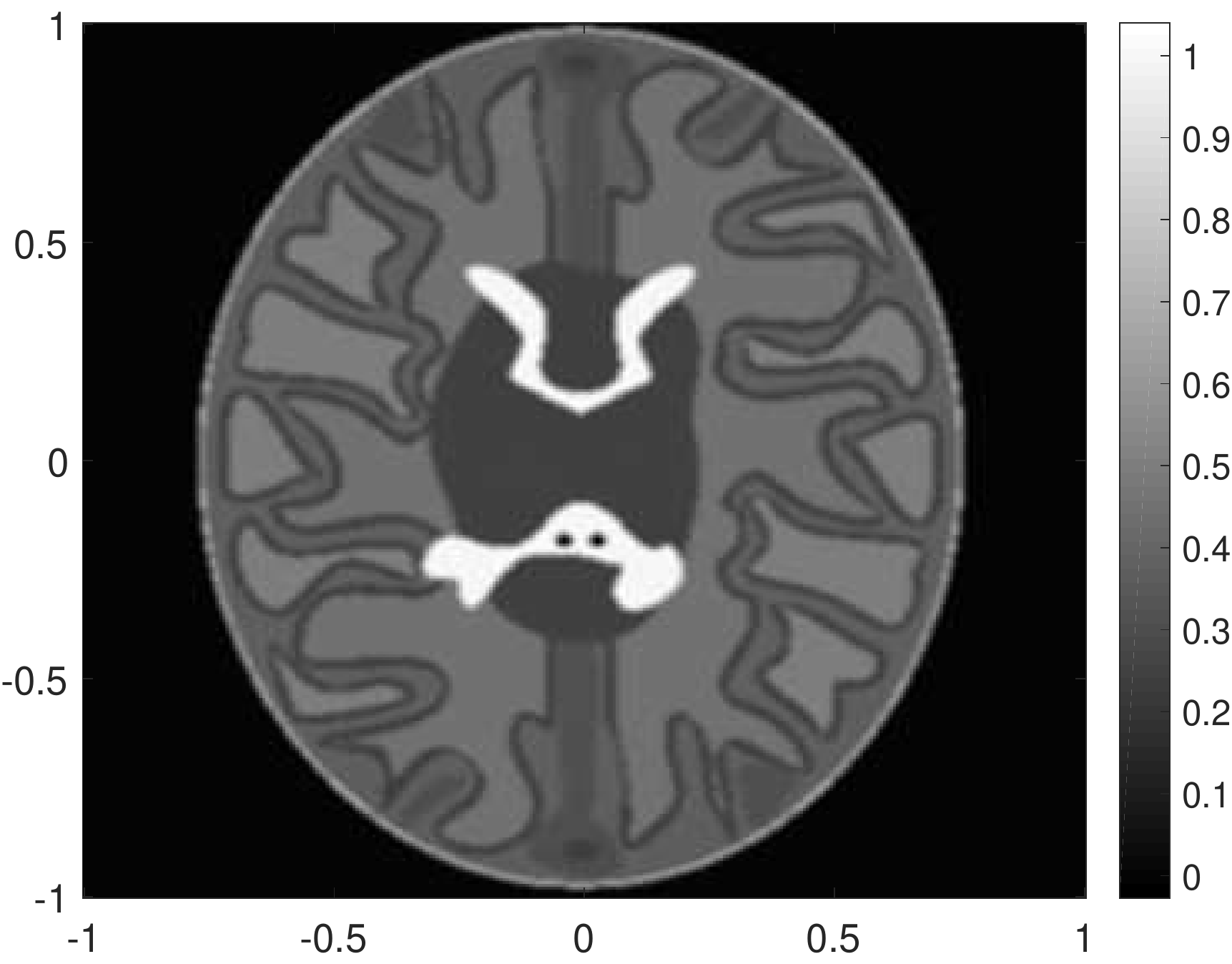}
			&
			\includegraphics[width=6.5cm]{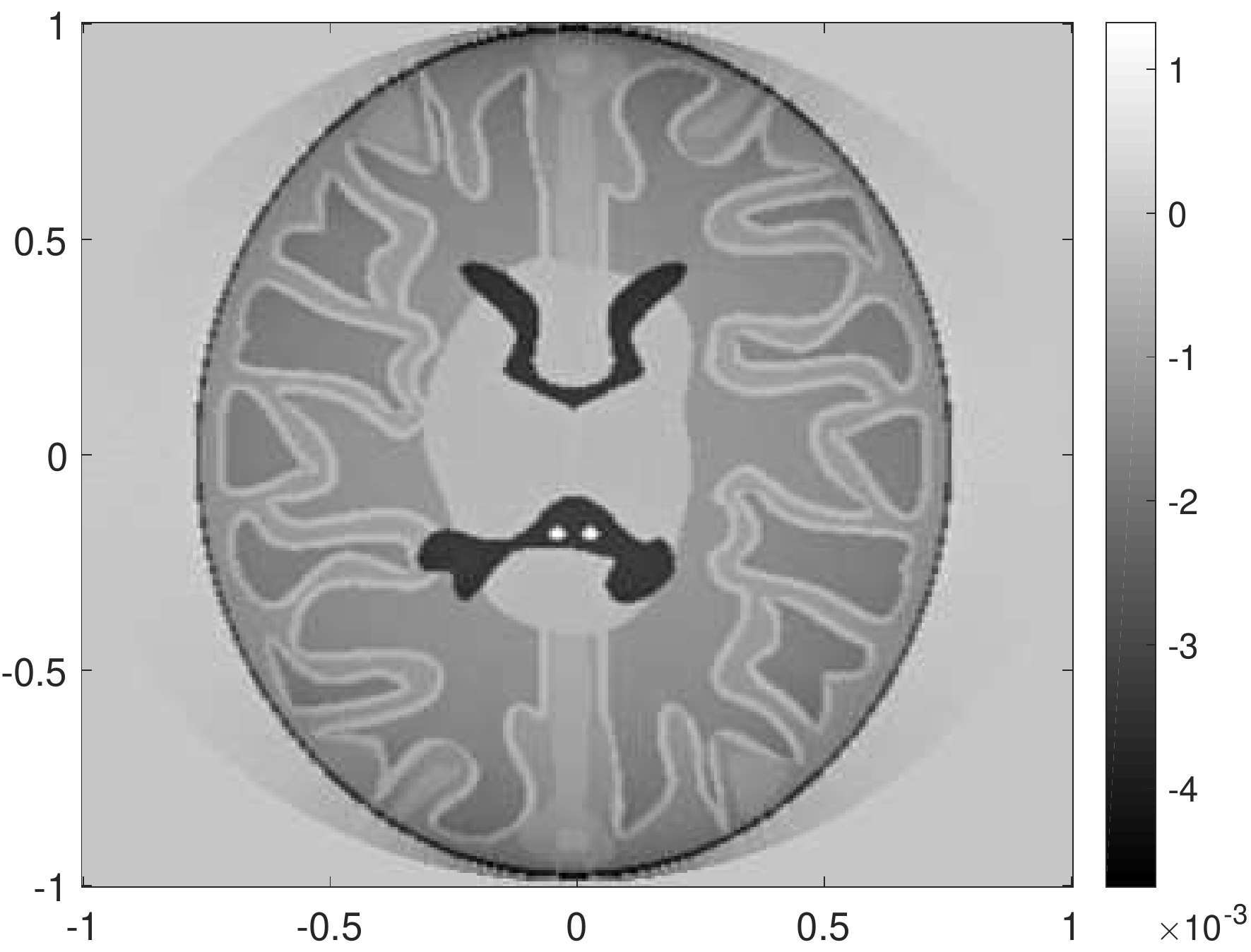} \\
			\includegraphics[width=6.5cm]{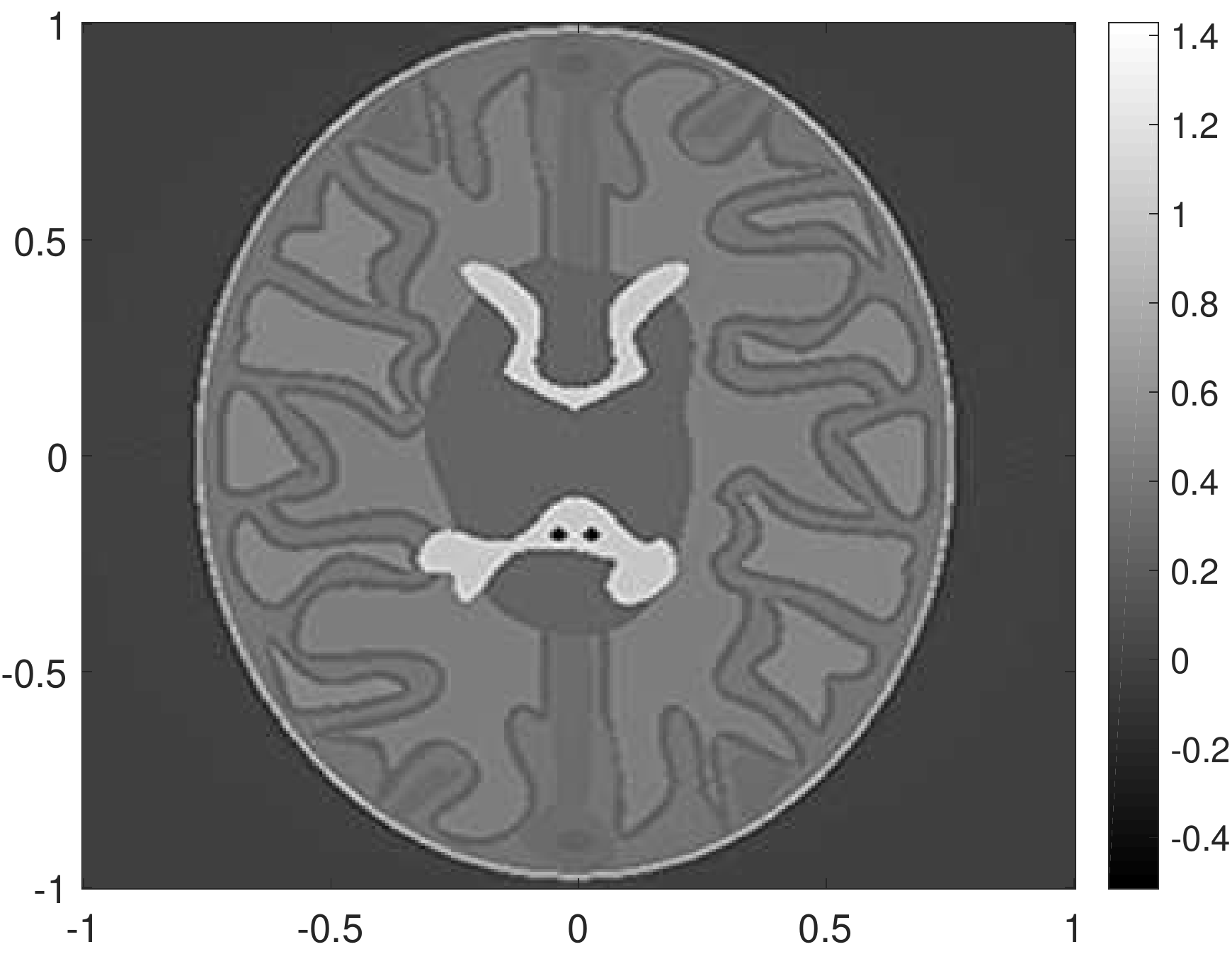}
			&
			\includegraphics[width=6.5cm]{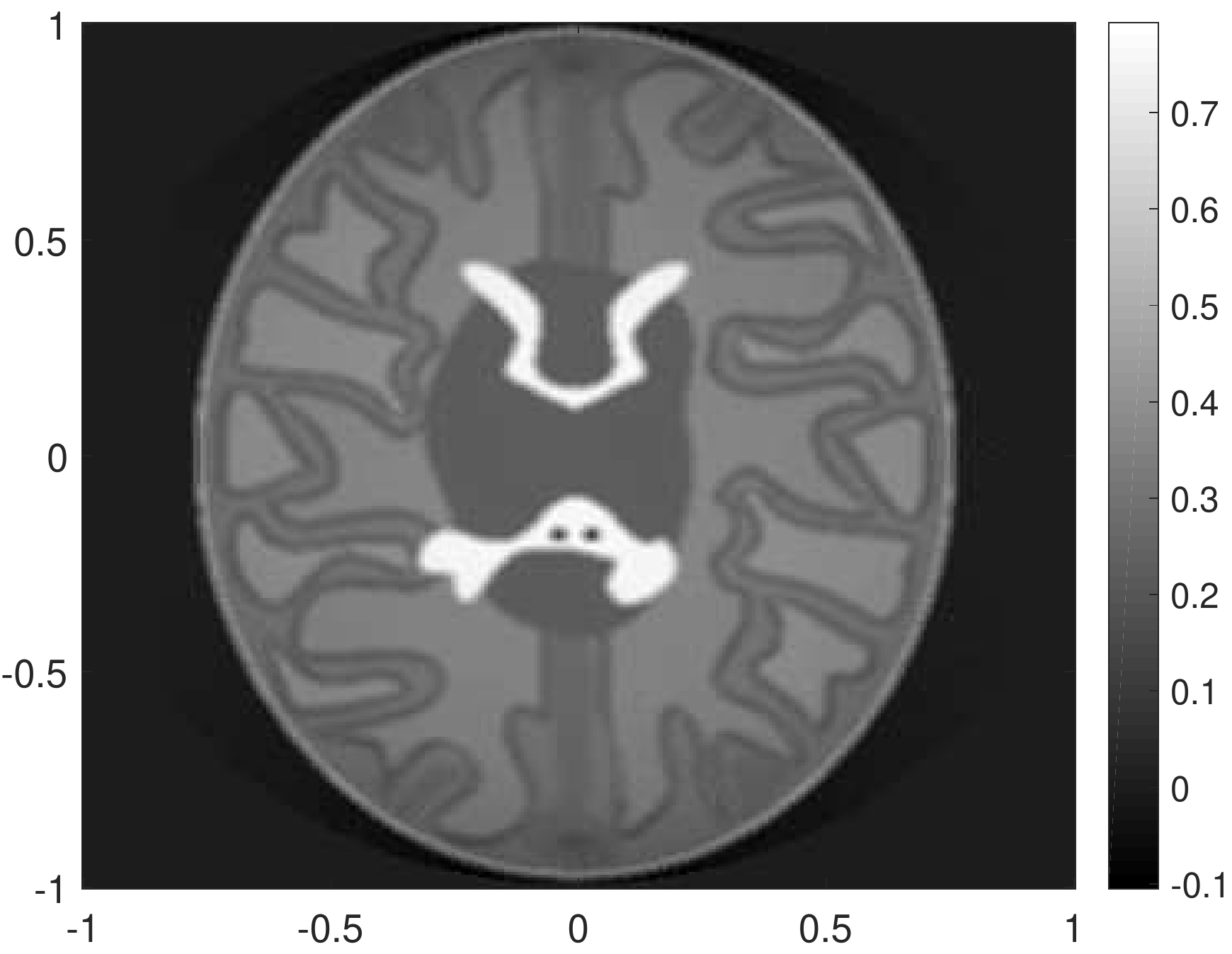} \\
			\includegraphics[width=6.5cm]{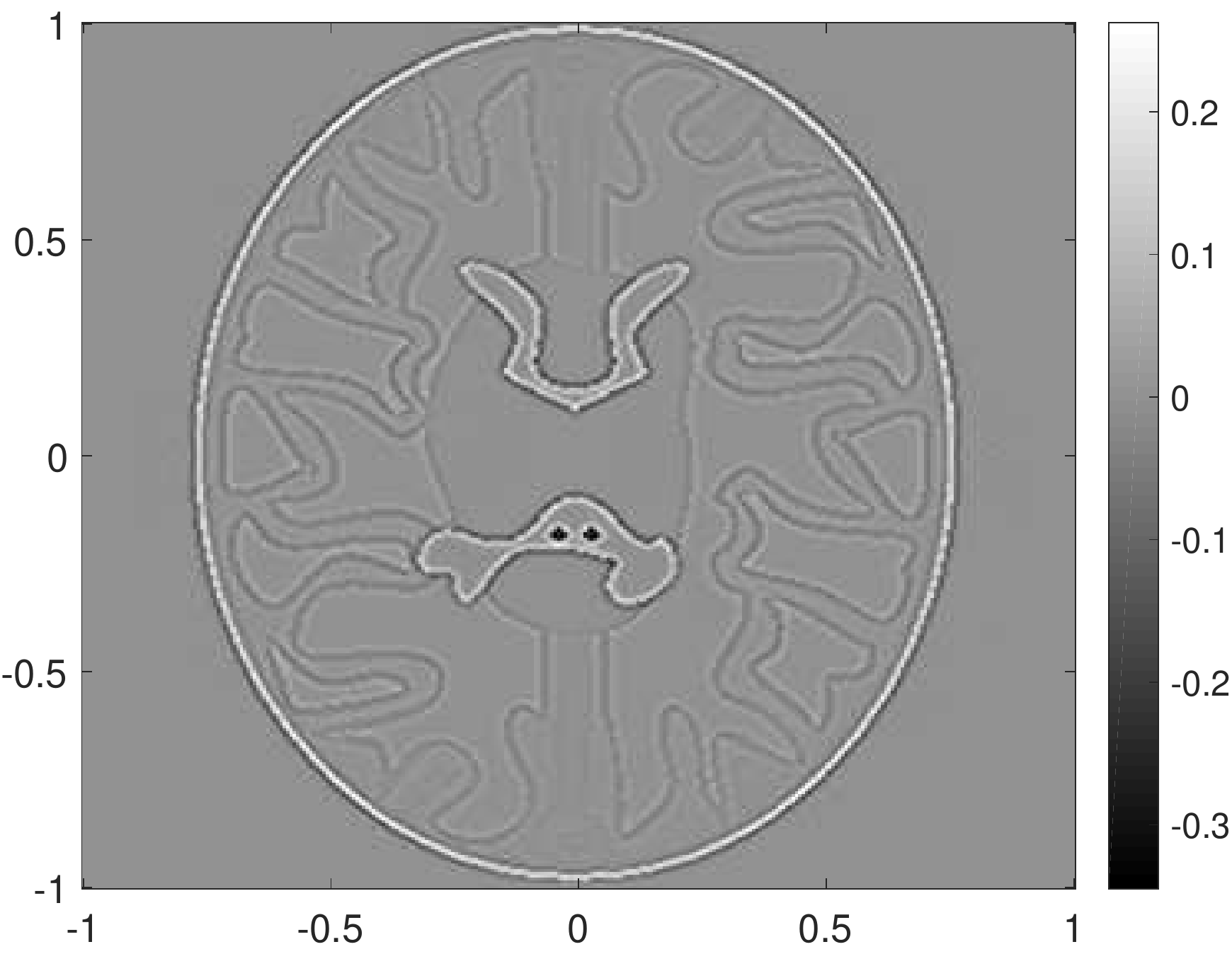}
			&
			\includegraphics[width=6.5cm]{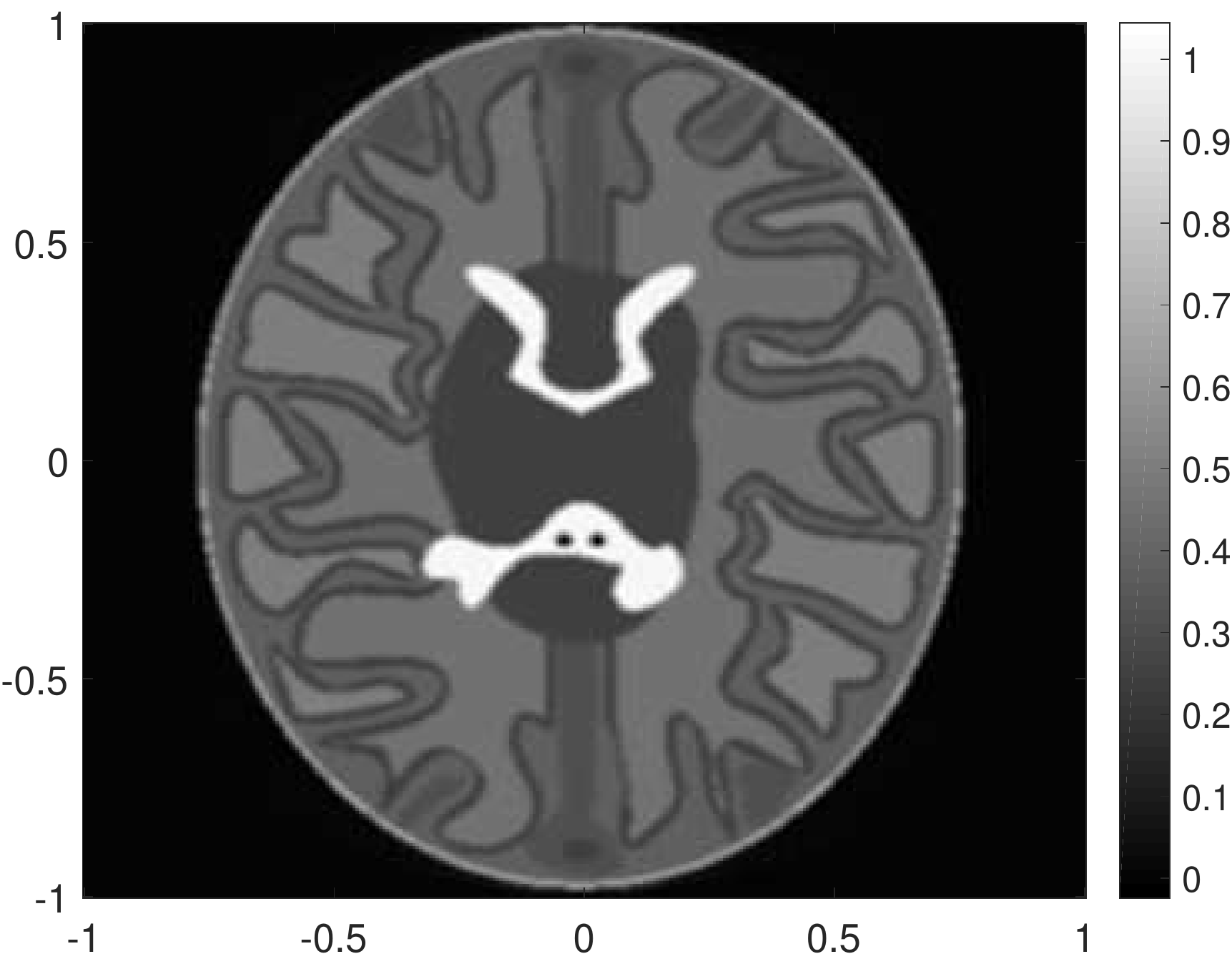}
		\end{tabular}
		\caption{Reconstructions with exact data: Top, left: $\mathbf{f_{\mathrm{rec,d}}}$ using Dirichlet measurements. Top, right: $\mathbf{f_{\mathrm{rec,n}}}$ using Dirichlet measurements. Middle, left: $\mathbf{f_{\mathrm{rec,d}}}$ using mixed measurements. Middle, right: $\mathbf{f_{\mathrm{rec,m}}}$ using mixed measurements. Bottom, left: $\mathbf{f_{\mathrm{rec,d}}}$ using Neumann measurements. Bottom, right: $\mathbf{f_{\mathrm{rec,n}}}$ using Neumann measurements.}
		\label{fig:rec_exact}
	\end{figure}
	In Figure \ref{fig:rec_exact} we observe that both implementations of our derived inversion formulas \eqref{eq:exactformnm} and \eqref{eq:exactformmixed} show very good results and approximate the head phantom almost perfectly. As we proved in Lemma \ref{lem:lemidentity3}, the numerical approximation of the integral (Figure \ref{fig:rec_exact} top, right) is close to zero. We point out that the numerical reconstruction $\mathbf{f_{\mathrm{rec,d}}}$ using mixed measurements is also quite good (Figure \ref{fig:rec_exact}, middle, left).
	\begin{figure}[h!]
		\centering
		\begin{tabular}{cc}
			\includegraphics[width=6.5cm]{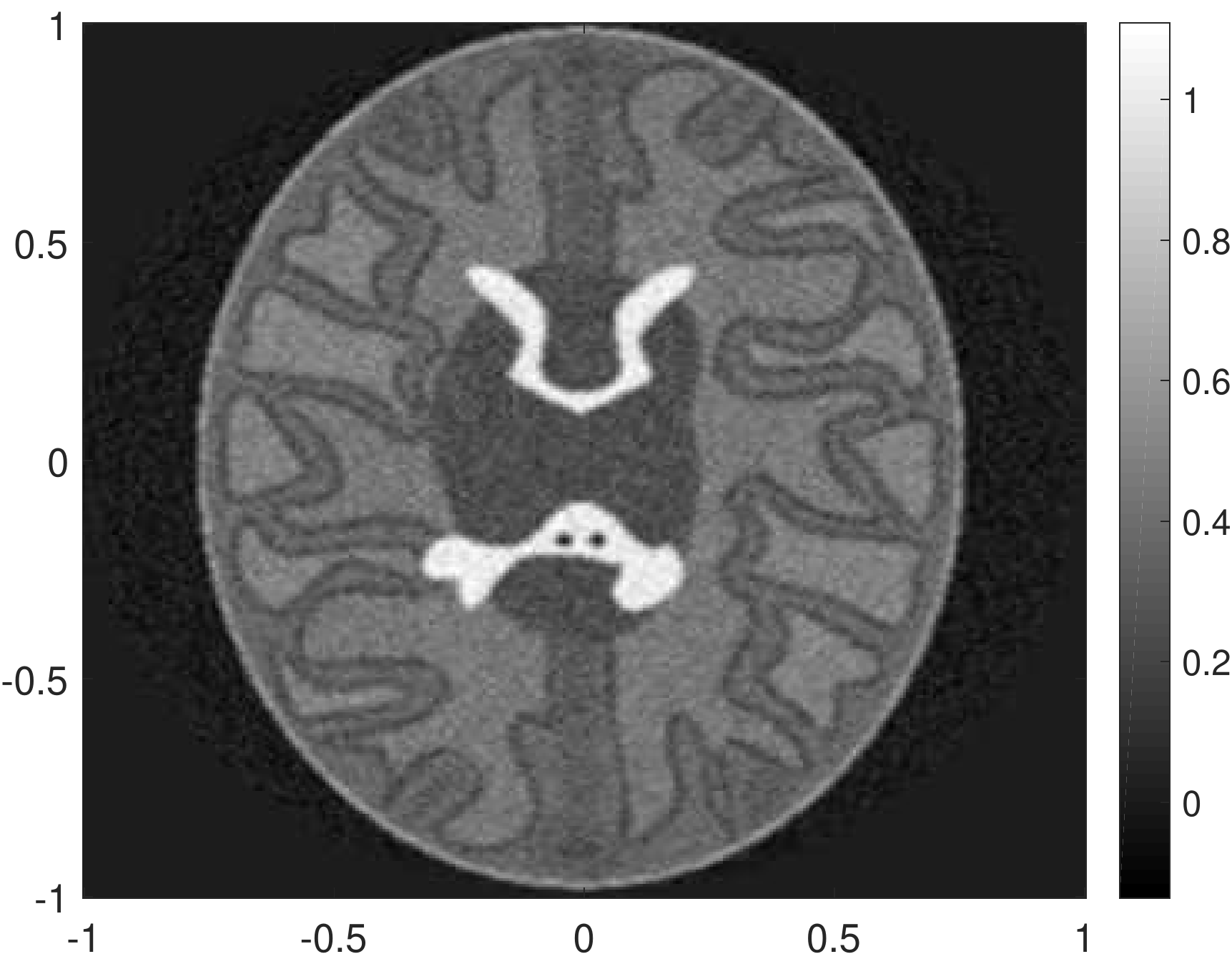}
			&
			\includegraphics[width=6.5cm]{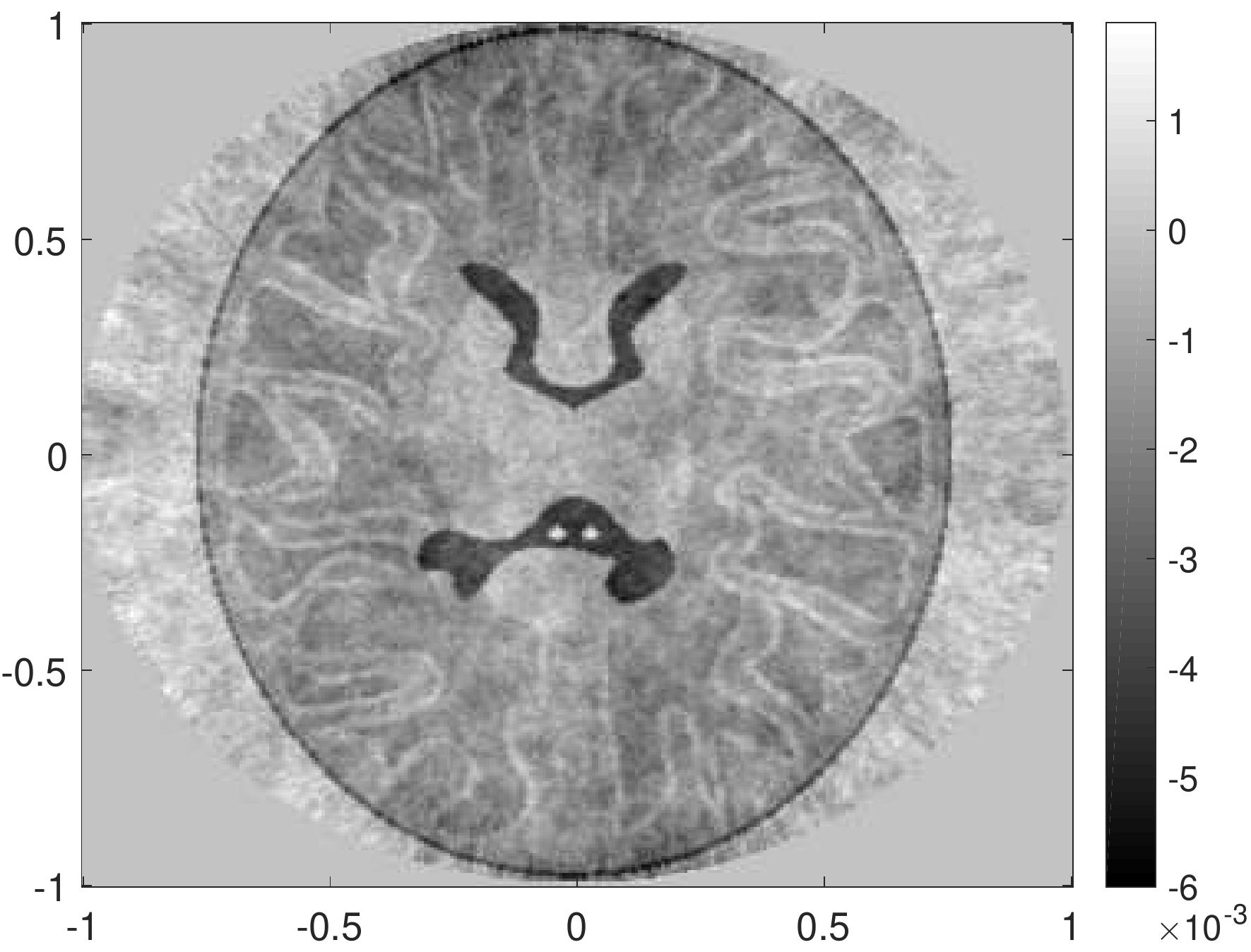} \\
			\includegraphics[width=6.5cm]{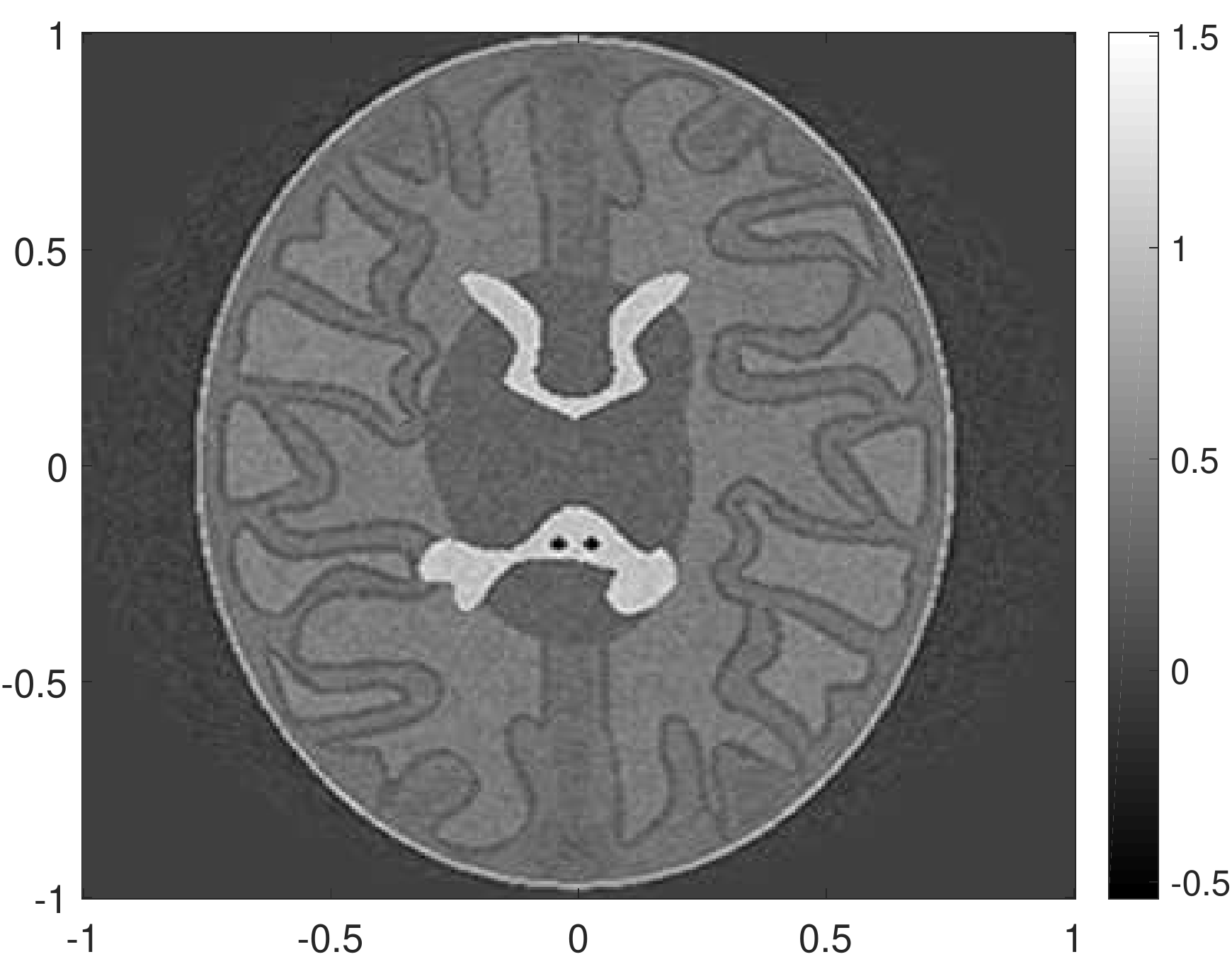}
			&
			\includegraphics[width=6.5cm]{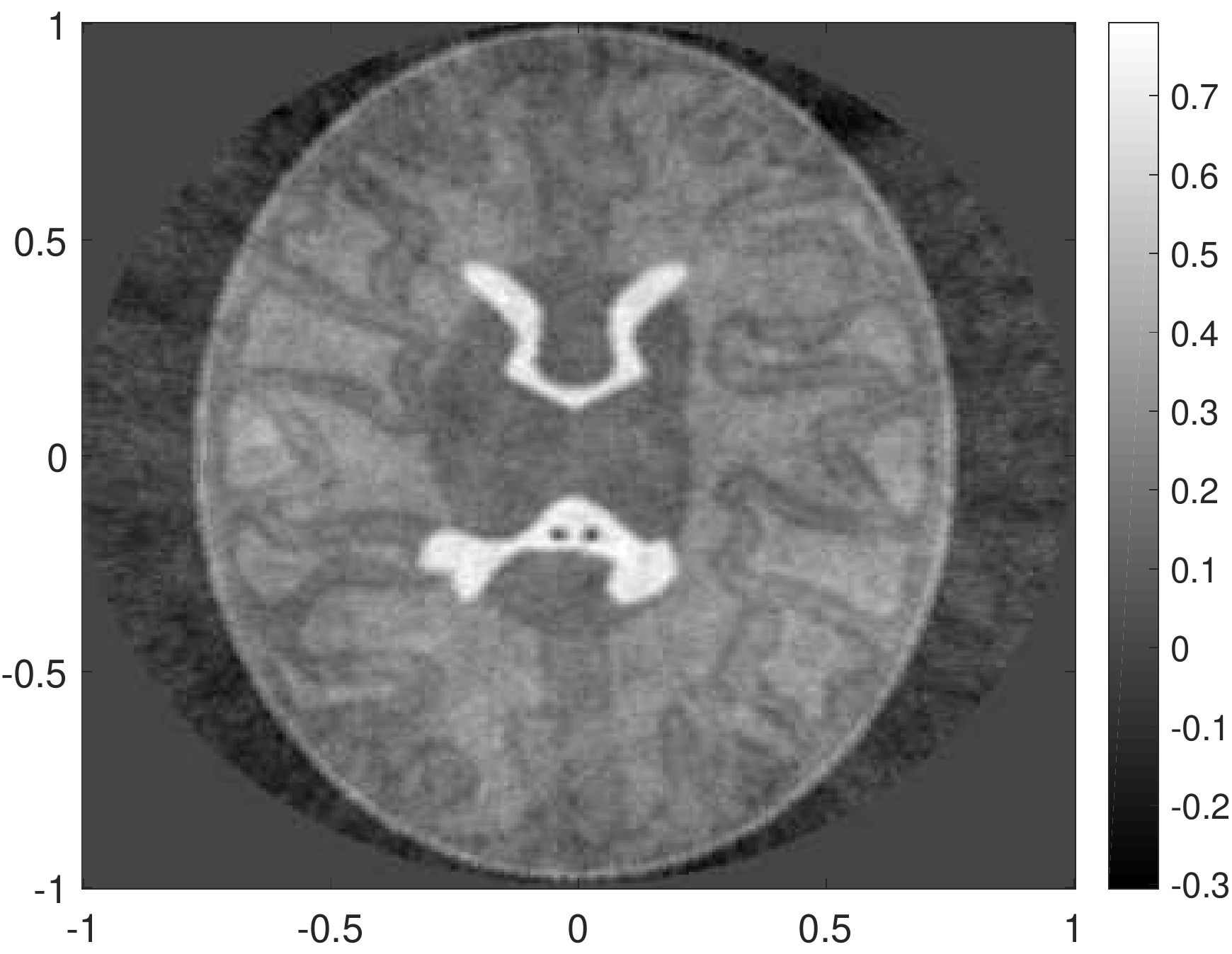} \\
			\includegraphics[width=6.5cm]{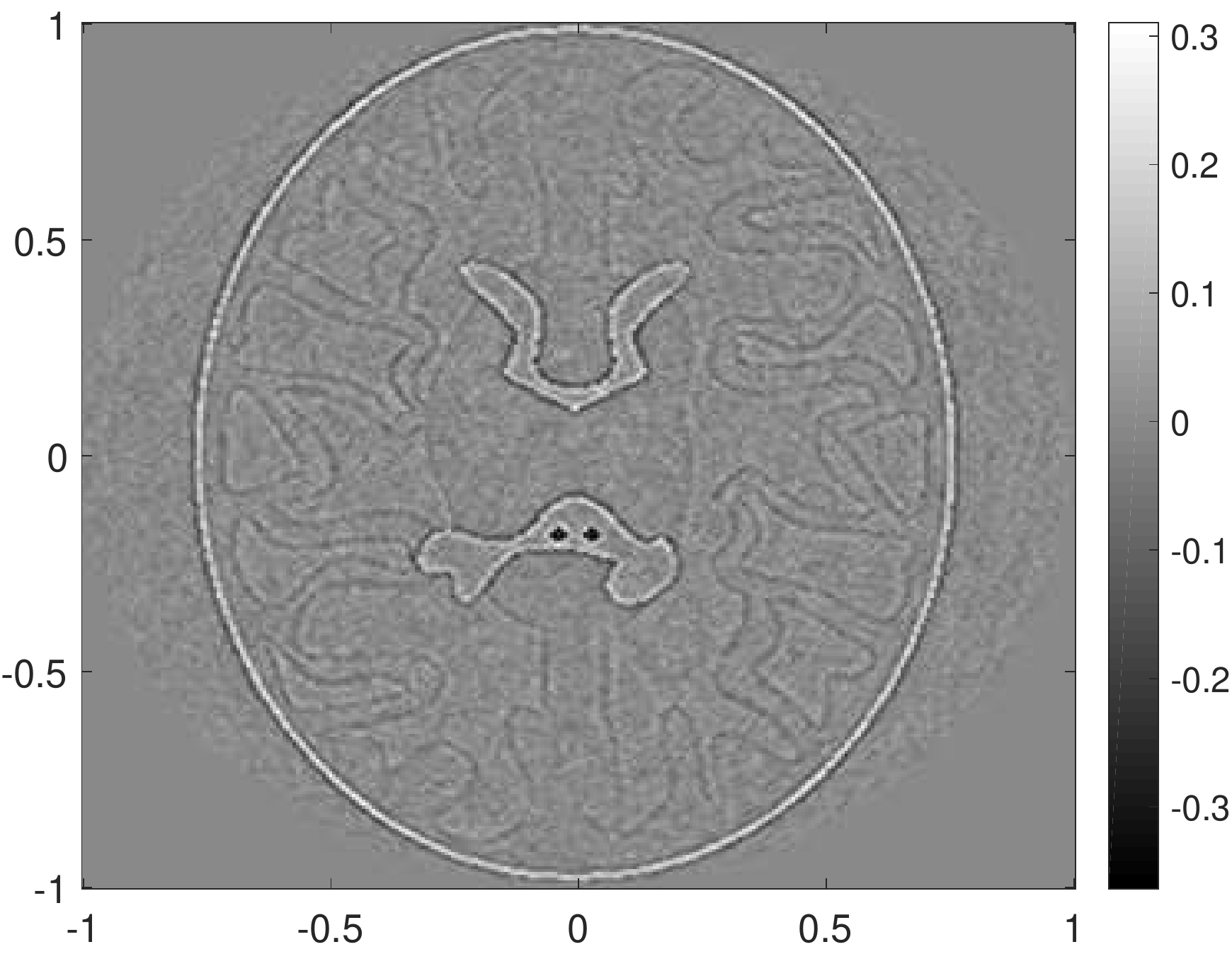}
			&
			\includegraphics[width=6.5cm]{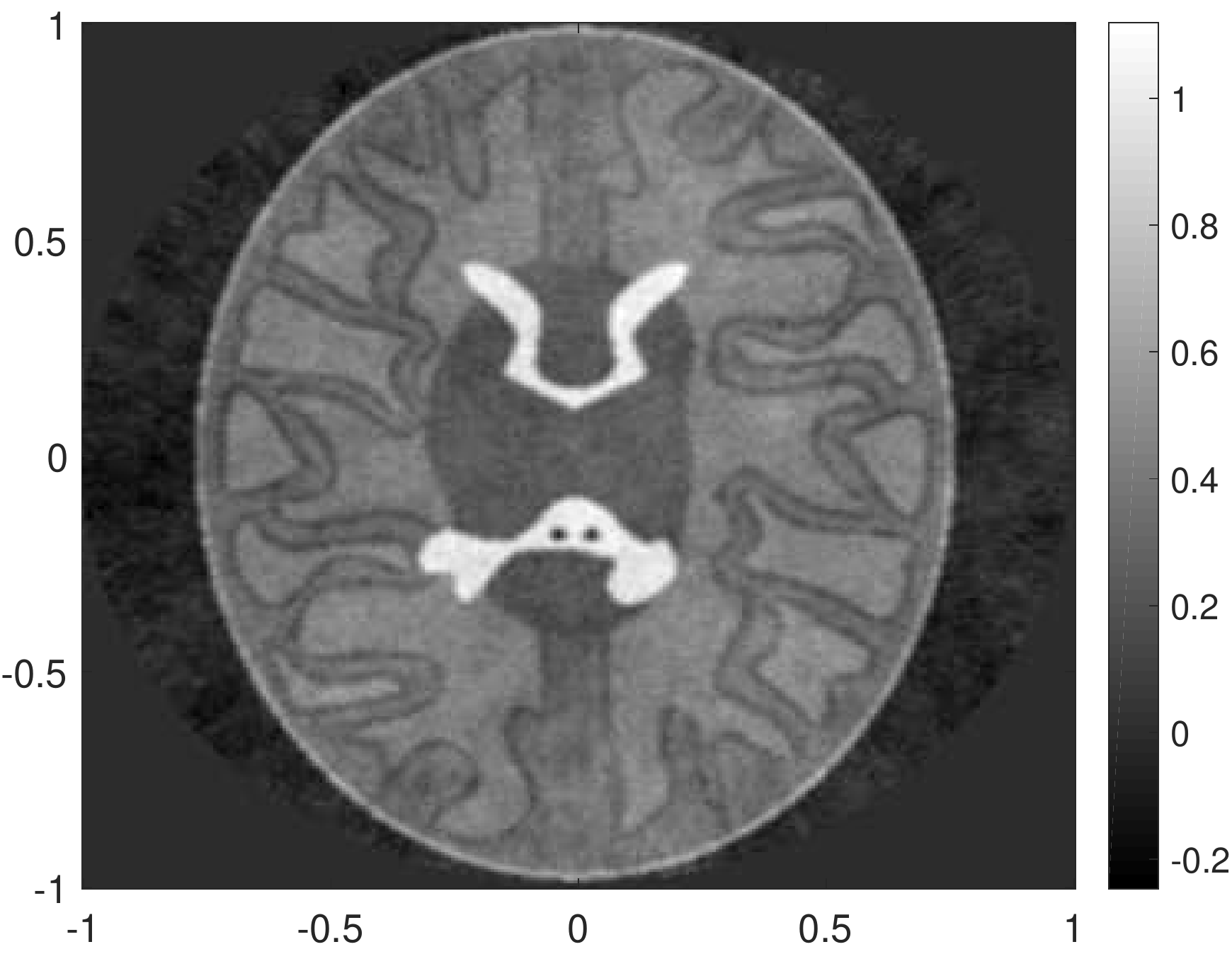}
		\end{tabular}
		\caption{Reconstructions with $10\%$ Gaussian noise added data: Top, left: $\mathbf{f_{\mathrm{rec,d}}}$ using Dirichlet measurements. Top, right: $\mathbf{f_{\mathrm{rec,n}}}$ using Dirichlet measurements. Middle, left: $\mathbf{f_{\mathrm{rec,d}}}$ using mixed measurements. Middle, right: $\mathbf{f_{\mathrm{rec,m}}}$ using mixed measurements. Bottom, left: $\mathbf{f_{\mathrm{rec,d}}}$ using Neumann measurements. Bottom, right: $\mathbf{f_{\mathrm{rec,n}}}$ using Neumann measurements.}
		\label{fig:rec_noisy10}
	\end{figure}
	\begin{figure}[h!]
		\centering
		\begin{tabular}{cc}
			\includegraphics[width=6.5cm]{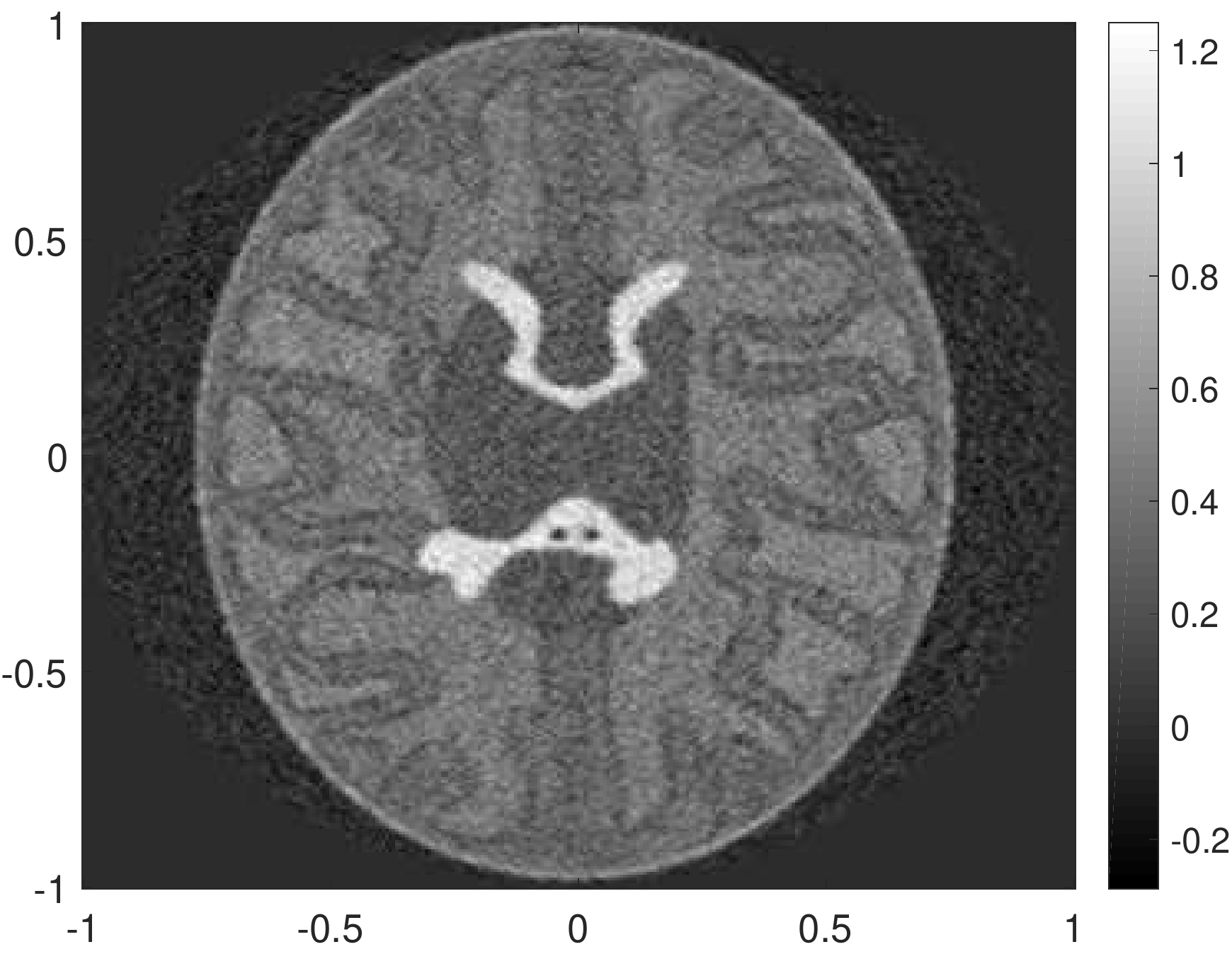}
			&
			\includegraphics[width=6.5cm]{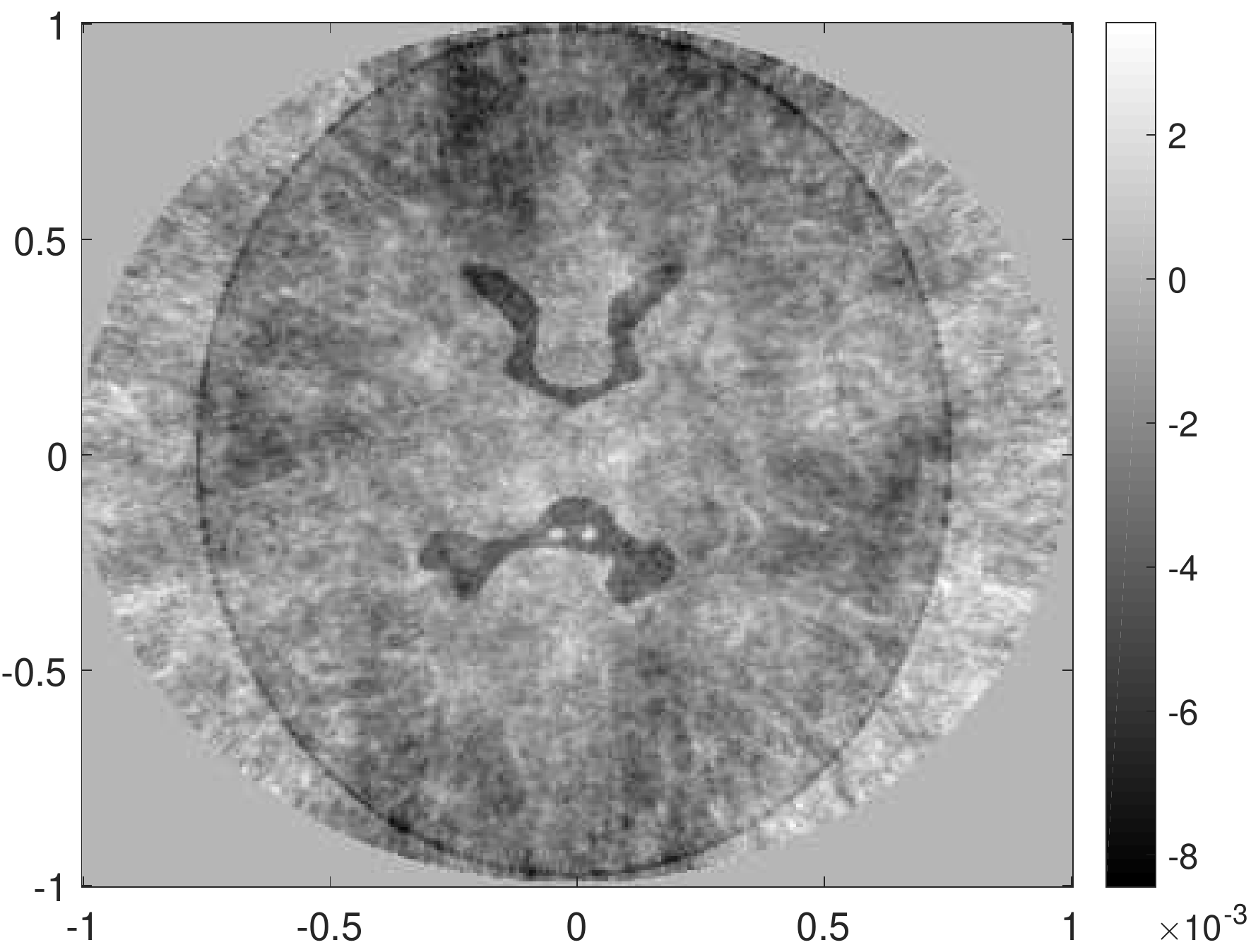} \\
			\includegraphics[width=6.5cm]{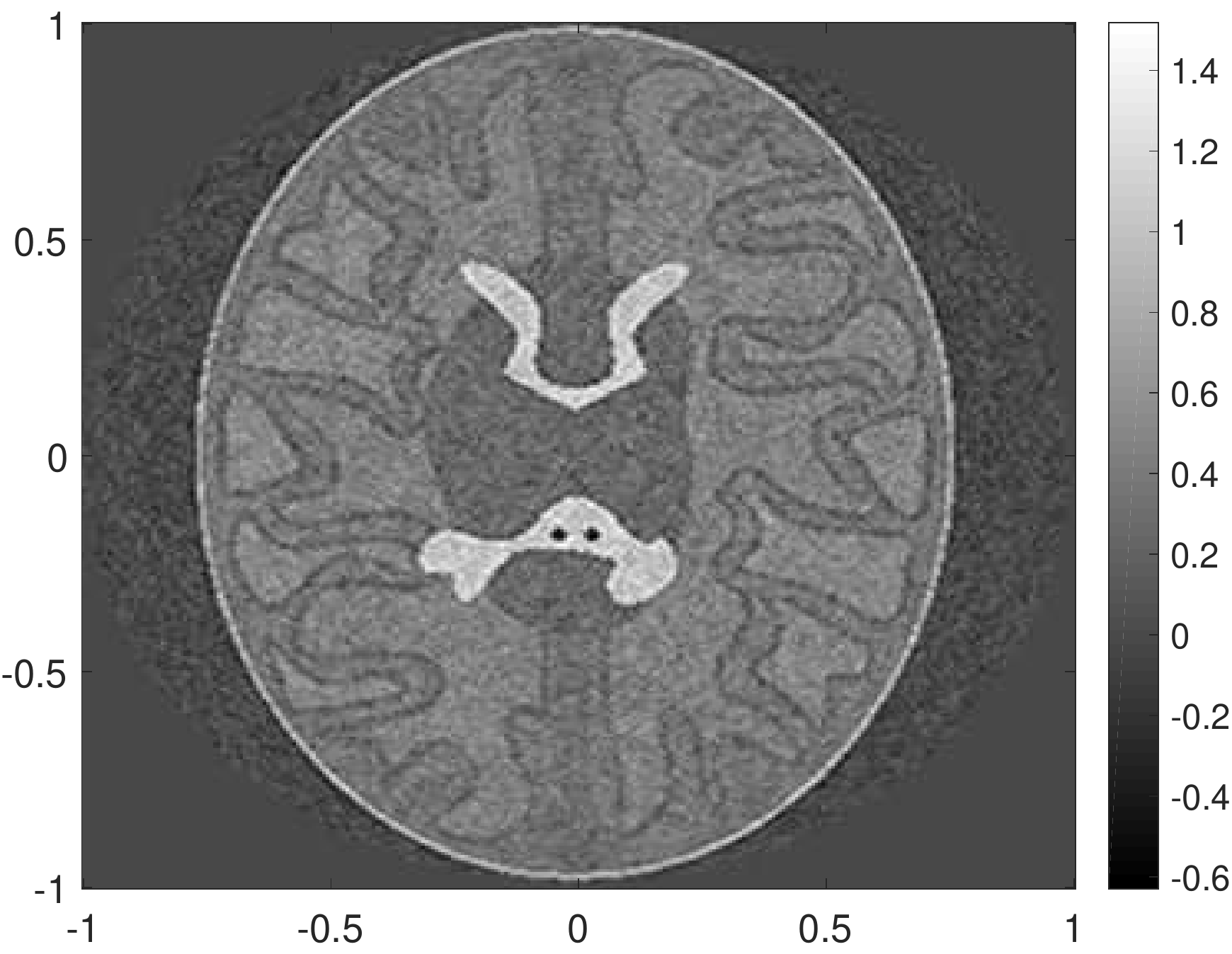}
			&
			\includegraphics[width=6.5cm]{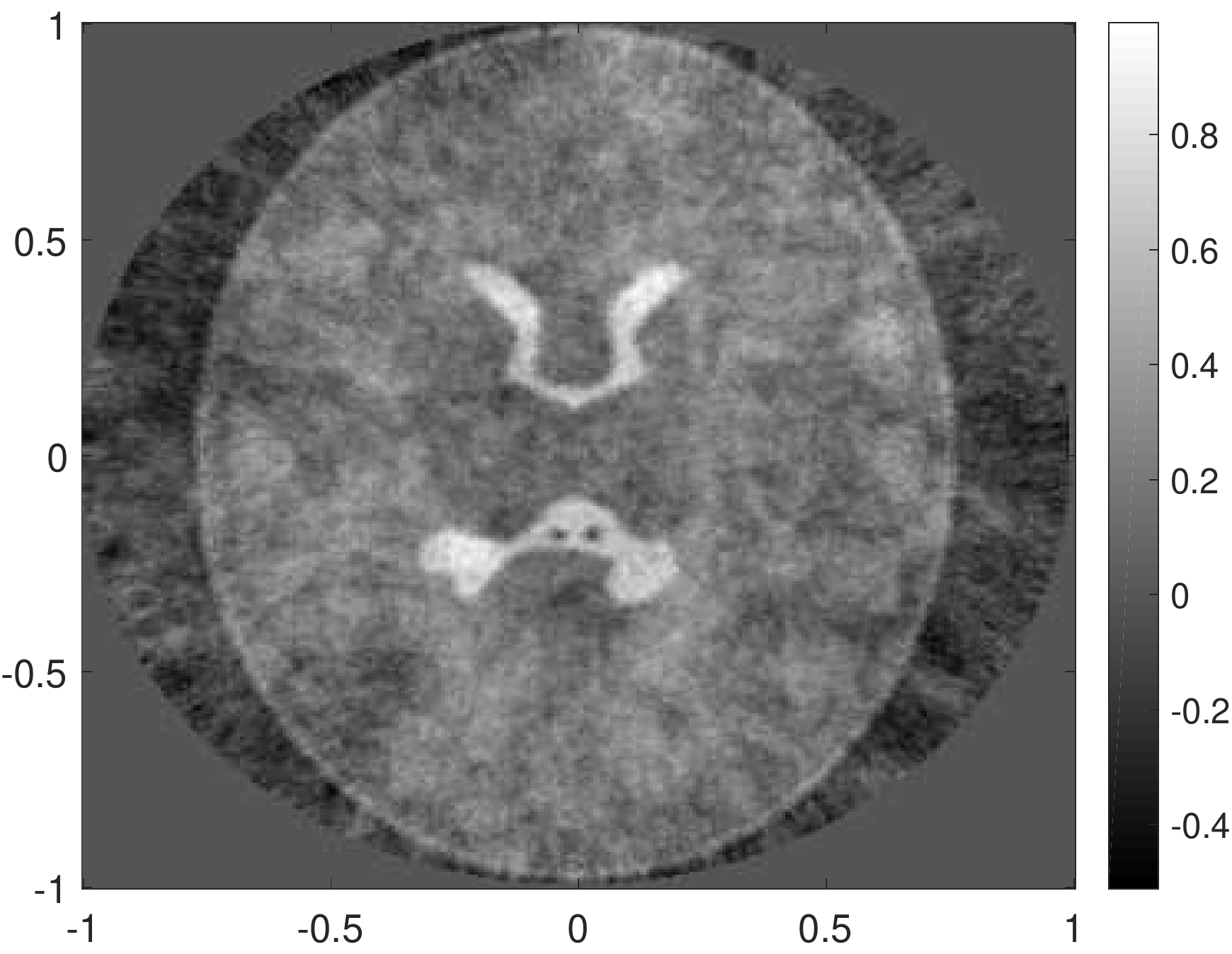} \\
			\includegraphics[width=6.5cm]{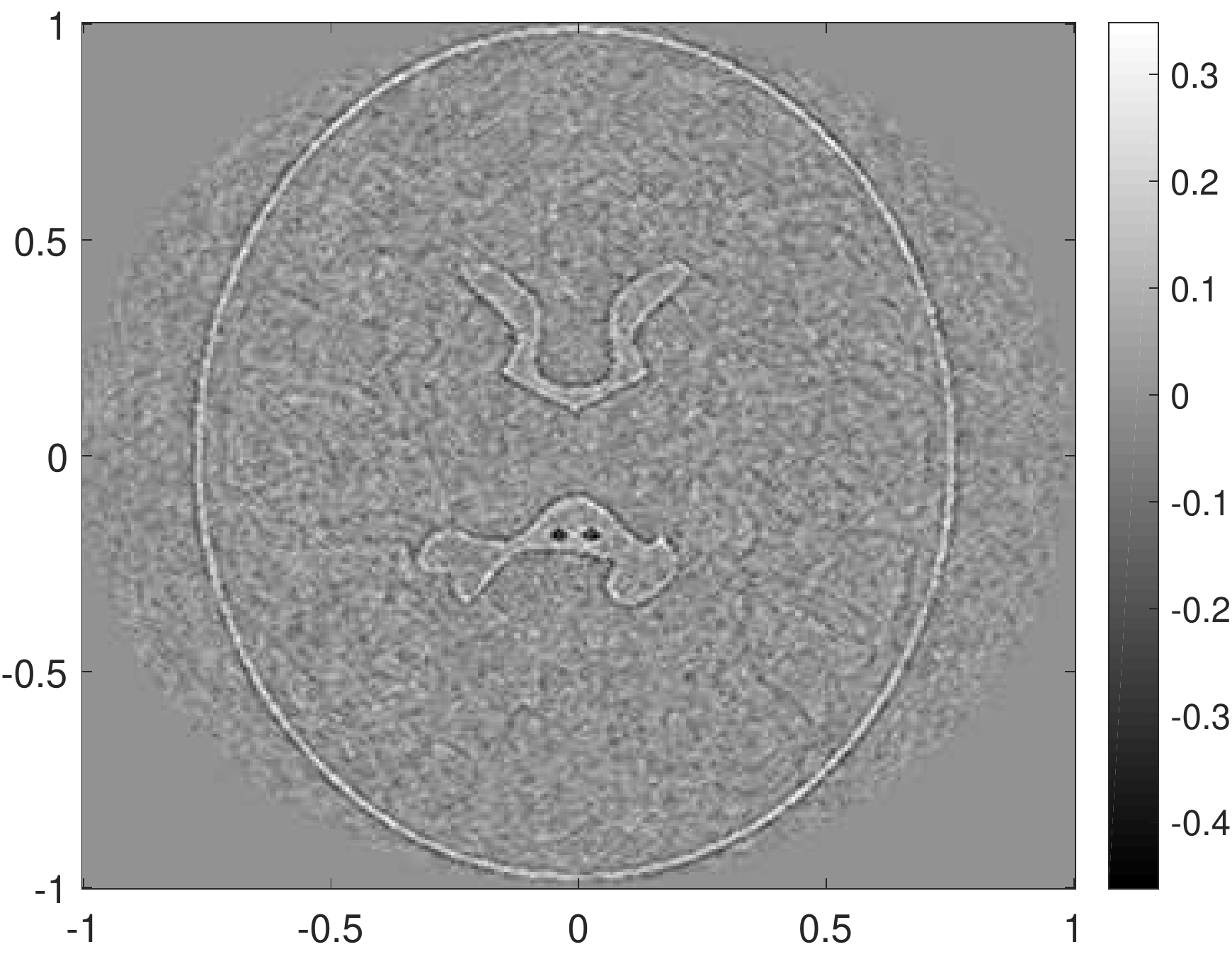}
			&
			\includegraphics[width=6.5cm]{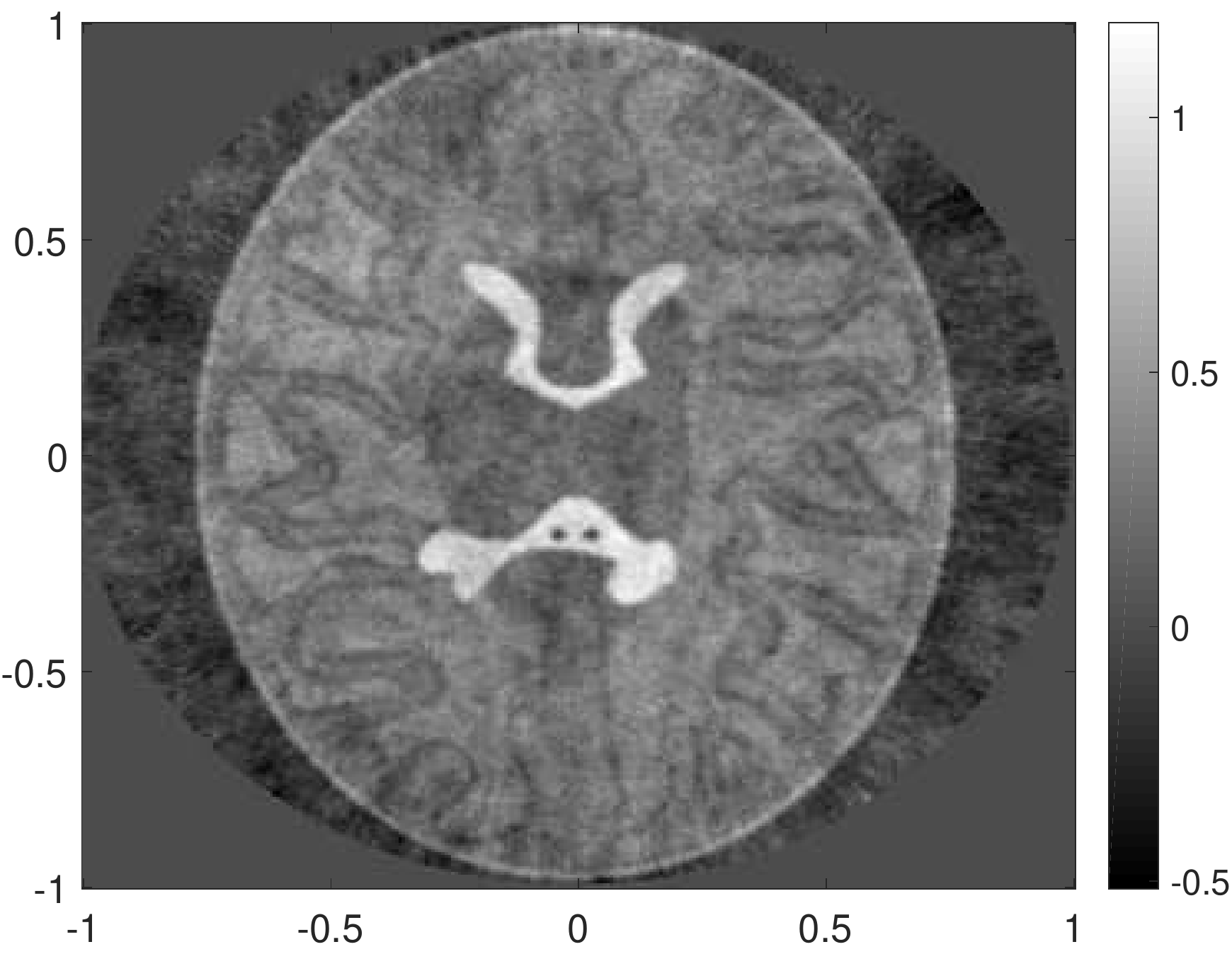}
		\end{tabular}
		\caption{Reconstructions with $20\%$ Gaussian noise added data: Top, left: $\mathbf{f_{\mathrm{rec,d}}}$ using Dirichlet measurements. Top, right: $\mathbf{f_{\mathrm{rec,n}}}$ using Dirichlet measurements. Middle, left: $\mathbf{f_{\mathrm{rec,d}}}$ using mixed measurements. Middle, right: $\mathbf{f_{\mathrm{rec,m}}}$ using mixed measurements. Bottom, left: $\mathbf{f_{\mathrm{rec,d}}}$ using Neumann measurements. Bottom, right: $\mathbf{f_{\mathrm{rec,n}}}$ using Neumann measurements.}
		\label{fig:rec_noisy20}
	\end{figure}
	Finally, we remark that the numerical reconstructions of the head phantom in Figure \ref{fig:rec_noisy10} show that the structure of the head phantom is still visible although Gaussian noise was added to the data. In Figure \ref{fig:rec_noisy20}, we also see that numerical implementation of formula \eqref{eq:ubp} with mixed measurements even shows a better result as the numerical implementation of the exact formula \eqref{eq:exactformmixed}. This leads to assumption that the inversion formula \eqref{eq:ubp} is also exact for mixed measurements.
	
	\section{Conclusion and Outlook}
	In this paper, we studied the problem of recovering the initial data of the two dimensional wave equation from Neumann traces. We established an explicit inversion formula for convex domains with smooth boundary up to an explicitly computed smoothing integral operator. This integral operator has been seen to vanish for circular and elliptical domains. We also derived an exact reconstruction formula for recovering the initial data from any linear combination of the solution of wave equation and its normal derivative on circular domains. The numerical results in the last section of this article showed that our implementation of the exact inversion formula leads to very good results even with noisy data.
	
In future work we intend to investigate the problem of determining the initial data of higher dimensional wave equations from Neumann measurements as well as from mixed measurements. We also want to establish explicit inversion formulas for Neumann data and mixed trace for other special domains such as certain quadratic hypersurfaces or even non-convex domains. Moreover, the derivation of inversion formulas from Neumann measurements on a finite time interval $\partial\Omega\times (0,T)$ instead of $\partial\Omega\times (0,\infty)$ could be also of great interest. 

%
\end{document}